\documentclass[11pt]{amsart}
\usepackage[T1]{fontenc}
\numberwithin{equation}{section}

%Package block 

%\usepackage{natbib}
\usepackage{float}
\usepackage{tikz-cd}
\usepackage{amsmath}%
\usepackage{amsfonts}%
\usepackage{amsthm}
\usepackage{amssymb}%
\usepackage{graphicx}
\usepackage{hyperref}

\usepackage{float}
\usepackage{tikz-cd}
\usepackage{amsmath}%
\usepackage{amsfonts}%
\usepackage{amsthm}
\usepackage{amssymb}%
\usepackage{graphicx}
\usepackage{hyperref}
\usepackage{enumerate}
\usepackage{comment}

\usepackage{geometry}
\usepackage{layout}
\usepackage[utf8]{inputenc}
\usepackage{amsmath}
\usepackage{graphicx}
\usepackage{amsmath}
\usepackage{amssymb}
\usepackage{amsfonts}
\usepackage{graphicx}
\usepackage{url}
\usepackage{MnSymbol}
\usepackage{arydshln}
\usepackage{outlines}
\usepackage{tikz-cd}

\usepackage{subfig}
\usepackage{amsthm}

\newcommand{\Z}{\mathbb{Z}}

\newcommand{\cA}{\mathcal{A}}
\newcommand{\cD}{\mathcal{D}}
\newcommand{\cB}{\mathcal{B}}
\newcommand{\cH}{\mathcal{H}}
\newcommand{\cM}{\mathcal{M}}
\newcommand{\cS}{\mathcal{S}}
\newcommand{\cJ}{\mathcal{J}}
\newcommand{\F}{\mathbb{F}}

\renewcommand{\O}{\mathcal{O}}
\renewcommand{\epsilon}{\varepsilon}

\renewcommand{\P}{\mathbb{P}}

\newcommand{\cMbar}{\overline{\mathcal{M}}}
\newcommand{\cAbar}{\tilde{\mathcal{A}}}

\newcommand{\dirlim}{\varinjlim}

\renewcommand{\o}{\varnothing}

\renewcommand{\o}{\varnothing}

\newcommand{\rank}{\mathrm{rank}}

\newcommand{\prank}{p\text{-}\rank}

\usepackage{multirow}

\newcommand{\<}{\left \langle}
\renewcommand{\>}{\right \rangle}

\usepackage{xcolor}

%Preamble

\makeatletter
\newtheorem*{rep@thm}{\rep@title}
\newcommand{\newrepthm}[2]{%
\newenvironment{rep#1}[1]{%
 \def\rep@title{#2 \ref{##1}}%
 \begin{rep@thm}}%
 {\end{rep@thm}}}
\makeatother

\newtheorem{thm}{Theorem}[section]

\newtheorem{lem}[thm]{Lemma}
\newtheorem{prop}[thm]{Proposition}
\newtheorem{cor}[thm]{Corollary}
\newtheorem{conj}[thm]{Conjecture}
\newtheorem{prob}[thm]{Open problem}

\newrepthm{thm}{Theorem}

\theoremstyle{definition}
\newtheorem{dfn}[thm]{Definition}

\theoremstyle{remark}
\newtheorem{rem}[thm]{Remark}

\begin{document}

\author{Du\v san Dragutinovi\'c}
\keywords{Oort's conjecture, automorphisms, supersingular, curves, Jacobians, genus four}
\address{ Mathematical Institute, Utrecht University,
P.O. Box 80010, 3508 TA Utrecht, The Netherlands}
\email{d.dragutinovic@uu.nl}

\title[Oort's conjecture and automorphisms of supersingular curves of genus four]{Oort's conjecture and automorphisms of supersingular curves of genus four}

\subjclass{14H10, 14H37, 14H40, 11G10, 14K12}

\begin{abstract}
We show that every component of the locus of smooth supersingular curves of genus $4$ in characteristic $p>2$ has a trivial generic automorphism group. As a result, we prove Oort's conjecture about automorphism groups of supersingular abelian fourfolds for $p>2$. Our main idea consists of estimating dimensions of the loci of smooth supersingular curves that admit an automorphism of prime order by considering possible choices of the corresponding quotient curves. This reasoning also results in a new proof of Oort's conjecture for $g = 3$ and $p>2$, previously proved by Karemaker, Yuboko, and Yu.
\end{abstract}

\maketitle

\section{Introduction}

Let $k$ be a field of characteristic $p>0$ and $E$ be a supersingular elliptic curve over $k$, that is $E[p](\bar{k}) = \{O\}$. For any $g\geq 2$, we say that a $g$-dimensional abelian variety $A$ over~$k$ is \textit{supersingular} if there is a $\Bar{k}$-isogeny  
$A\otimes \bar{k} \overset{\sim}{\to} E^g\otimes \bar{k}$. Denote by $\cA_g = \cA_g\otimes \F_p$ the moduli space of principally polarized abelian varieties of dimension $g$, and let $\cS_g \subset \cA_g$ be the closed locus of all supersingular $g$-dimensional abelian varieties. It is well-known that $\cA_g$ is a smooth irreducible stack of dimension $\dim \cA_g = \frac{g(g + 1)}{2}$. By  \cite[Theorem 4.9]{lioort}, we know that $\cS_g$ is pure of dimension $\left \lfloor \frac{g^2}{4} \right \rfloor$, while its number of irreducible components is given in terms of certain class numbers. Throughout the paper, by an automorphism, we mean a geometric automorphism. In \cite[Problem~4]{emo01}, Oort posed the following conjecture and open problem.

\begin{conj}[Oort's conjecture] For any $g\geq 2$ and a prime number $p>0$, every component of the supersingular locus $\cS_g \subseteq \cA_g$ has generic automorphism group $\{\pm 1\}$.    
\label{conj:oort}
\end{conj}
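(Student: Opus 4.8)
The statement is open in general, so I would set out a uniform strategy that works directly on $\cA_g$ rather than through Jacobians: for $g\geq 5$ the Torelli locus is too thin to contain a generic point of a component of $\cS_g$, so the quotient-curve bookkeeping cannot reach it. Fix an irreducible component $T\subseteq\cS_g$; by \cite[Theorem 4.9]{lioort} it has dimension $\floor{g^2/4}$, and since $\{\pm1\}$ sits inside every geometric automorphism group it is enough to rule out a further automorphism at the generic point of $T$. If the generic automorphism group were larger than $\{\pm1\}$, a generic $[A]\in T$ would carry a polarization-preserving automorphism $\phi\neq\pm1$ of some finite order $m$. The whole problem thus reduces to proving, for each relevant $m$, that the locus $\cS_g^{(m)}\subseteq\cS_g$ of supersingular principally polarized abelian varieties admitting such a $\phi$ has dimension strictly less than $\floor{g^2/4}$.

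First I would bound the possibilities for $m$. Acting on the rational $2g$-dimensional (co)homology of $A$, the element $\phi$ generates $\Q[\phi]$, which contains $\Q(\zeta_m)$ acting on a nonzero subspace; hence $\varphi(m)=[\Q(\zeta_m):\Q]$ divides a positive integer at most $2g$, leaving only finitely many orders $m$. For each such $m$ I would stratify $\cS_g^{(m)}$ by the isomorphism type of the $\Z[\zeta_m]$-action together with the polarization, equivalently by the signature $(p_\tau,q_\tau)_\tau$ over the archimedean places, which is rigid in families. Abelian varieties carrying such structure are parametrized by a PEL-type moduli space $\cM$ of unitary type (for odd $m$) or symplectic-with-involution type (the involution case $m=2$, $\phi\neq-1$), and every stratum of $\cS_g^{(m)}$ is the image under a finite map of the supersingular locus $\cM^{ss}$. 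It therefore suffices to bound $\dim\cM^{ss}$.

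The analytic heart of the plan is the dimension of the supersingular (basic) locus inside each such PEL moduli space. I would use Rapoport--Zink uniformization together with the basic-locus dimension formulas (Vollaard--Wedhorn, G\"ortz--He--Nie and their extensions) to express $\dim\cM^{ss}$ in terms of the signatures, and then compare with $\floor{g^2/4}$. The guiding heuristic is that a nontrivial $\zeta_m$-action forces $\dim\cM<\dim\cA_g=g(g+1)/2$ strictly while the supersingular locus fills about half of $\cM$, just as $\cS_g$ fills about half of $\cA_g$; the real work is converting this into the strict inequality $\dim\cM^{ss}<\floor{g^2/4}$ uniformly in $g$, $m$, and the signature.

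I expect the boundary cases to be the principal obstacle, since the crude comparison leaves no room precisely when the action is most ``balanced''. The hardest subcase is the involution $m=2$, where $\cM$ is a Prym-type moduli of abelian varieties with an extra symmetry and the supersingular locus is largest relative to $\floor{g^2/4}$. Independent difficulties arise in the characteristics where the clean PEL/Dieudonn\'e picture degenerates, namely $p=2$ and $p\mid m$ (wild ramification), whose local models and basic-locus formulas demand separate treatment. Handling all of these uniformly across every $g$ --- in contrast to the finite case analysis available for small $g$ --- is the crux of passing from the known cases to the full conjecture.
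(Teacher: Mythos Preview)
The statement you are addressing is labeled a \emph{Conjecture} in the paper, and the paper does not prove it in general: it establishes only the cases $g=3$ and $g=4$ for $p>2$ (Corollary~\ref{cor:main_oort_a4} and Section~\ref{sec:genus3}). You correctly note at the outset that the statement is open, and what you have written is a research programme rather than a proof. So there is nothing to compare at the level of proofs; what can be compared is your proposed strategy against the paper's method for the cases it does settle.

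The approaches are genuinely different. The paper exploits that for $g\le 4$ the Torelli locus is large enough that every component of $\cS_g$ meets $\cJ_g$ in a Jacobian of a smooth non-hyperelliptic curve, and then bounds the dimension of the loci of supersingular curves with an automorphism of prime order $l$ by case-splitting on the quotient genus. You instead propose to work directly on $\cA_g$: bound the possible orders $m$ via $\varphi(m)\le 2g$, stratify $\cS_g^{(m)}$ by PEL data, and bound the basic (supersingular) locus inside each resulting unitary or orthogonal-type Shimura variety via Rapoport--Zink uniformization and the G\"ortz--He--Nie type formulas. This is a reasonable line of attack for $g\ge 5$, where, as you say, the Torelli locus is too thin for the paper's method to apply.

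That said, there is a structural gap beyond the ones you flag. Your ``guiding heuristic'' --- that the supersingular locus in the PEL space $\cM$ fills roughly half of $\cM$, just as $\cS_g$ fills roughly half of $\cA_g$, so that $\dim\cM<\dim\cA_g$ should force $\dim\cM^{ss}<\lfloor g^2/4\rfloor$ --- cannot be made uniform, because the conjecture is \emph{false} for $(g,p)=(2,2)$ and $(3,2)$. Any argument that proceeds by a single inequality $\dim\cM^{ss}<\lfloor g^2/4\rfloor$ valid for all signatures and all $p$ must fail; the correct statement has genuine exceptions, and your scheme must therefore produce those exceptions rather than a clean bound. You acknowledge $p=2$ and $p\mid m$ as ``separate treatment'', but the issue is sharper: in those cases the desired inequality is simply wrong, and the programme as stated offers no mechanism to detect when it holds and when it does not. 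The involution case $m=2$ you single out as hardest is precisely where the known counterexamples live, so the ``crude comparison leaves no room'' not merely in a boundary sense but in an absolute one.
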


\begin{prob} Suppose that $g\geq 3$ and a $p$ is a prime number. Suppose there exists a supersingular curve of genus $g$ in characteristic $p$. Is it true that there exists a supersingular curve $C$ of genus $g$ over $\overline{\mathbb{F}}_p$ such that $\mathrm{Aut}(C)$ is trivial?
\label{prob:oort}
\end{prob}

So far, it is known that Oort's conjecture fails for $(g, p) = (2, 2)$ by \cite{ibukiyama} and for $(g, p) = (3, 2)$ by \cite{oort_hess}, while it holds when $g = 2$ or $g = 3$ for any $p>2$ by \cite{ibukiyama} and \cite{kyy}. Before we state our main result, let us briefly introduce some notions. 
To a stable curve $C$ we can associate its Jacobian $\cJ_C = \mathrm{Pic}^0(C)$ which is a semi-abelian variety. This induces the Torelli morphism $j:\cMbar_g \to {\cAbar}_g$, where $\cMbar_g = \cMbar_g \otimes \F_p$ is the moduli space of stable curves of genus $g$ and ${\cAbar}_g$ is a fixed smooth toroidal compactification of $\cA_g$ as in \cite[Theorem 4.1]{alexeev}. We denote by $\cM_g$ the sublocus of $\cMbar_g$ consisting of smooth curves, and recall the well-known fact $\dim \cM_g = 3g - 3$. 
We say that a stable curve $C$ is supersingular if its Jacobian is supersingular; $C$ is then necessarily of compact type, i.e., its dual graph is a tree. Denote by $\cM_g^{ss}$ the locus of smooth supersingular curves of genus~$g$ in characteristic $p>0$. In Section \ref{sec:invariants}, we give an overview of all invariants in characteristic $p>0$ that we will use throughout the paper. In Section \ref{sec:proof}, we prove the following theorem, our main result.

\begin{thm} For $p>2$, every component of $\cM_4^{ss}$ has a trivial generic automorphism group.    
\label{thm:genus4}
\end{thm}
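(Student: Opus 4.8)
The plan is to reduce the theorem to a dimension count indexed by prime-order automorphisms and their quotient curves. A genus-$4$ curve has finite automorphism group, so $\Aut(C)$ is nontrivial precisely when it contains an element of some prime order $\ell$, with $\ell$ ranging over a finite set; and by the purity of $\cS_4$ together with the fact that the Jacobian locus is a divisor in $\cA_4$ (since $\dim\cM_4=9=\dim\cA_4-1$), every component of $\cM_4^{ss}$ has dimension at least $3$. Hence it suffices to show that for each prime $\ell$ the sublocus $\cM_4^{ss}[\ell]\subseteq\cM_4^{ss}$ of curves admitting an order-$\ell$ automorphism satisfies $\dim\cM_4^{ss}[\ell]\le 2$; taking the union over the finitely many relevant $\ell$ then exhibits the locus of curves with nontrivial automorphism group as a proper closed subset of every component.

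I would first pin down the admissible quotients. If $\sigma\in\Aut(C)$ has prime order $\ell$, put $D=C/\langle\sigma\rangle$, of genus $g_D$, and let $r$ be the number of branch points of $C\to D$. In the tame case $\ell\ne p$, Riemann--Hurwitz reads $6=\ell(2g_D-2)+r(\ell-1)$, and the connectedness of the cover forces the local monodromies to sum to $0$ in $\Z/\ell$ (so $r\ne1$ when $g_D\ge1$). A direct check leaves only the types $(\ell;g_D,r)\in\{(2;0,10),(2;1,6),(2;2,2),(3;0,6),(3;1,3),(3;2,0),(5;0,4)\}$; in particular $\ell\in\{2,3,5\}$. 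In the wild case $\ell=p$, comparing $2g_C-2=6$ with the lower bound $2(p-1)$ for a wild different exponent (equivalently, the Artin--Schreier genus formula) shows that no genus-$4$ curve has an automorphism of order $p$ once $p\ge7$, so only $p\in\{3,5\}$ survive; for those I would redo the Riemann--Hurwitz bookkeeping with the appropriate higher different exponents, again obtaining a short finite list of quotient types.

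For each admissible type I would estimate the dimension of the corresponding stratum. From the $\langle\sigma\rangle$-action, $\cJ_C$ is isogenous to $\cJ_D\times P$, where $P$ is the Prym part, of dimension $4-g_D$ and carrying a $\Z[\zeta_\ell]$-action, and $C$ is supersingular if and only if both $\cJ_D$ and $P$ are. So $\dim\cM_4^{ss}[\ell;g_D,r]$ is bounded by (the dimension of the locus of admissible $D$) $+$ (the number of moduli of the branch divisor) $-$ (the codimension imposed by supersingularity of $P$). The types $(3;2,0)$, $(3;1,3)$, $(5;0,4)$ are immediate: the admissible $D$ form a family of dimension $\le1$ (for $g_D=2$ this uses $\dim\cM_2^{ss}\le1$) or $0$, and the branch divisor contributes at most $1$ or $2$ further moduli, so $\dim\le2$ comfortably. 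The genuinely delicate types are $(2;2,2)$, $(2;1,6)$ and the $g_D=0$ cases $(2;0,10)$ and $(3;0,6)$, in which the branch points move freely before supersingularity is imposed; there I would control the supersingular sublocus either via the Prym map together with the purity and known codimension of $\cS_{g'}$ in $\cA_{g'}$ for $g'\in\{2,3,4\}$, or---for the cyclic covers of $\P^1$, including the hyperelliptic locus $\cM_4^{ss}\cap\cH_4$---by a hands-on analysis of the Cartier operator and the invariants of Section~\ref{sec:invariants} on these classically parametrised families.

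I expect the main obstacle to be exactly this last group, and above all the hyperelliptic case: a priori $\cH_4$ has dimension $7$ while $\cS_4$ has codimension $6$ in $\cA_4$, so the naive count only yields $\dim(\cM_4^{ss}\cap\cH_4)\le1$ provided the hyperelliptic Torelli image meets $\cS_4$ in the expected codimension---something that must be proved, not assumed---and the analogous point, with a $3$- or $2$-dimensional Prym, recurs for $(2;1,6)$ and $(2;2,2)$. Once every quotient type is shown to yield a stratum of dimension $\le2$, their union is a proper closed subset of each component of $\cM_4^{ss}$, whose dimension is at least $3$, so the generic curve in each component has trivial automorphism group, which is the assertion of the theorem.
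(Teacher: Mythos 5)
Your overall architecture is exactly that of the paper: reduce to showing that for each prime $\ell$ the locus of supersingular genus-$4$ curves with an order-$\ell$ automorphism has dimension $\le 2$, while every component of $\cM_4^{ss}$ has dimension $\ge 3$ because $\cJ_4$ is a divisor meeting the $4$-dimensional $\cS_4$ inside the smooth $\cA_4$; and your enumeration of quotient types $(\ell; g_D, r)$ is consistent with the paper's case division (the paper dispatches $\ell>3$, and $\ell=3$ with $g_D\ge 1$, in one stroke via the Achter--Glass--Pries bound $\dim\Gamma\le 2(g-g_D)/(\ell-1)+f_D-1$, which also covers the wild case, so it never needs your explicit list). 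The problem is that the content of the theorem lives precisely in the four cases you flag as "the main obstacle" and then defer, and for at least two of them the method you propose cannot close the gap. Intersection theory in a smooth ambient space gives only \emph{lower} bounds on the dimension of an intersection; "expected codimension" arguments via the Prym map or via $\mathrm{codim}(\cS_4)=6$ give no upper bound whatsoever, so neither $\dim(\cH_4\cap\cM_4^{ss})\le 2$ nor the bound for the type $(2;2,2)$ follows from the purity of $\cS_{g'}$. For the hyperelliptic case the needed bound $\dim\cH_4^{ss}\le 2$ is a nontrivial theorem of Achter--Pries (Lemma \ref{lem:oort_hess}); for the trigonal case the paper invokes Achter--Pries's result that the generic cyclic trigonal curve has absolutely simple Jacobian (Lemma \ref{lem:triel}); for $p=3$ one needs the Pries--Zhu computation $\dim\mathcal{AS}_{4,0}=2$ (Lemma \ref{prieszhu}); and for the bielliptic case the paper either quotes \eqref{eqn:aws} or degenerates to the boundary of the space of admissible covers (Lemma \ref{lem:aws}). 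None of these is a routine transversality statement.

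The most serious gap is the type $(2;2,2)$, where $C\to D$ is a double cover of a genus-$2$ curve. Here the naive count gives $\dim\cM_2^{ss}+2=3$ before imposing supersingularity of the $2$-dimensional Prym $P$, and there is no a priori reason the Prym map on this $3$-fold meets $\cS_2$ properly; your proposal offers no mechanism to rule out a $3$-dimensional component. The paper's Lemma \ref{lem:MD4} handles this by a genuinely different idea: since the isogeny $\cJ_C\sim\cJ_D\times P$ has degree $2$ prime to $p$, the polarized Dieudonn\'e module decomposes, which by Lemma \ref{lemma:eo_genus4} forces $j(\Gamma)\subseteq\overline{Z}_{[4,3]}$ and excludes the indecomposable stratum $Z_{[4,3,1]}$; a $3$-dimensional complete family would then have to meet the $0$-dimensional stratum $Z_{[4,3,2,1]}$ in positive dimension because the strata $Z_{[4,3]}$ and $Z_{[4,3,2]}$ are quasi-affine --- a contradiction. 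Without this (or an equivalent) argument, the case $\ell=2$, $g_D=2$ remains open and the proof is incomplete. A smaller point: you should also justify that every component of $\cM_4^{ss}$ genuinely corresponds to a component of $\cJ_4\cap\cS_4$ (rather than mapping into a proper subvariety of one), which uses the injectivity of the Torelli map on points together with the fact that the locus of decomposable supersingular p.p.a.v.'s has dimension $2<3$.
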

As an immediate consequence, we get an answer to Open problem \ref{prob:oort} for $g = 4$ and $p>2$. 

\begin{cor} For $p>2$, there exists a smooth supersingular curve of genus $4$ over $\overline{\mathbb{F}}_p$ whose automorphism group is trivial.     
\end{cor}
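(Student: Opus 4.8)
The corollary is an immediate consequence of Theorem~\ref{thm:genus4}, so the plan is simply to unwind what the conclusion of that theorem provides. First I would recall that for a smooth projective curve $C$ of genus $g\geq 2$ over an algebraically closed field one has $H^0(C,T_C)=0$; hence in any family of such curves the automorphism group scheme is finite and unramified over the base, and in particular the order of the geometric automorphism group is an upper semicontinuous function on the base. Consequently, for an irreducible component $Z$ of $\cM_4^{ss}$, the statement of Theorem~\ref{thm:genus4} that the generic automorphism group of $Z$ is trivial is equivalent to the existence of a dense open substack $U\subseteq Z$ every geometric point of which parametrizes a smooth supersingular curve of genus $4$ with trivial automorphism group.

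It then remains only to exhibit a geometric point of such a $U$. For this I would invoke that $\cM_4^{ss}\neq\varnothing$ for every prime $p>2$ (supersingular curves of genus four in characteristic $p>2$ are known to exist; cf.\ Section~\ref{sec:invariants}, and this is in any case part of the input needed to connect $\cM_4^{ss}$ to the components of $\cS_4$ in the application to Oort's conjecture mentioned in the abstract), so we may fix an irreducible component $Z$ and the corresponding nonempty dense open $U\subseteq Z$. Since $U$ is a nonempty substack of finite type over the algebraically closed field $\overline{\mathbb{F}}_p$, it has an $\overline{\mathbb{F}}_p$-point $[C]$; then $C$ is a smooth supersingular curve of genus $4$ over $\overline{\mathbb{F}}_p$ with $\Aut(C)=\{1\}$, which is precisely the curve sought. (Note that, as $C$ is a curve rather than an abelian variety, ``trivial'' here genuinely means $\{1\}$ and not $\{\pm 1\}$.)

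There is no substantive obstacle in this argument: the two ingredients are Theorem~\ref{thm:genus4} itself and the nonemptiness of $\cM_4^{ss}$ for $p>2$, and the only point deserving a word of care is the passage from a statement about the \emph{generic} automorphism group to an honest $\overline{\mathbb{F}}_p$-point with trivial automorphism group. That passage is just the combination of upper semicontinuity of automorphism orders in a family of curves of genus $\geq 2$ with the density of closed points in a variety over an algebraically closed field, both of which are standard.
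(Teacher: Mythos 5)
Your argument is correct and is essentially the paper's own proof: the paper likewise combines Theorem~\ref{thm:genus4} with the nonemptiness of $\cM_4^{ss}$ (citing the existence results of Kudo--Harashita--Senda and of Pries, which in fact hold for every $p>0$). Your additional care about passing from the generic automorphism group to an actual $\overline{\mathbb{F}}_p$-point via upper semicontinuity is a correct elaboration of a step the paper leaves implicit.
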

\begin{proof}
As it was shown in \cite[Corollary 1.2]{khs} by construction and later in \cite[Theorem~1.1]{pries23} using a geometric argument, there exists a smooth supersingular curve of genus~$4$ over $\overline{\mathbb{F}}_p$ for any $p>0$. In other words, $\cM_{4}^{ss}$ is always non-empty, so that Theorem \ref{thm:genus4} implies the result.    
\end{proof}

The proof of Theorem \ref{thm:genus4} consists of estimating dimensions of the loci of smooth supersingular curves $C$ that admit an automorphism of prime order $l>0$. For any such automorphism $\iota$, we get the quotient curve $D = C/\<\iota\>$, which is a smooth curve of some genus $g_D$. In Section \ref{sec:double0}, we present some known results about the case $g_D = 0$ for any $l>0$, and in Sections \ref{sec:double1} and \ref{sec:double2}, we pay special attention to the case $l = 2$ with $g_D = 1$ or $g_D = 2$. As another consequence of the preceding theorem, we have the following result.

\begin{cor}
Oort's conjecture \ref{conj:oort} holds for $g = 4$ and $p>2$.  
\label{cor:main_oort_a4}
\end{cor}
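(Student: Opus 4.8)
The plan is to deduce Corollary \ref{cor:main_oort_a4} from Theorem \ref{thm:genus4} by transferring the statement about curves to a statement about principally polarized abelian fourfolds via the Torelli morphism. Recall that for $g \le 3$ the Torelli map is dominant, so every component of $\cS_g$ meets the image of the Jacobian locus; for $g = 4$ the Jacobian locus $j(\cM_4)$ is a divisor in $\cA_4$, but it is precisely here that the dimension count saves us: by \cite[Theorem 4.9]{lioort} the supersingular locus $\cS_4$ is pure of dimension $\lfloor 16/4 \rfloor = 4$, while $\dim \cM_4^{ss} = \dim \cM_4 \cap (\text{supersingular}) $ has dimension at least $\dim \cM_4 - \codim(\cS_4 \subset \cA_4) = 9 - 6 = 3$, and in fact $\dim j(\cM_4^{ss})$ agrees with $\dim \cS_4$ only if... so the first step is to record the precise relationship between the components of $\cM_4^{ss}$ and those of $\cS_4$.

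More concretely, first I would argue that every component $Z$ of $\cS_4$ contains a dense open subset parametrizing Jacobians of smooth curves. The point is that $\cS_4$ is pure of dimension $4$ and, by the purity/irreducibility results on the supersingular locus together with the fact that the complement of $j(\cM_4)$ inside $\cA_4$ (the locus of decomposable ppav's and Jacobians of singular stable curves) has the locus of \emph{indecomposable non-Jacobians} empty in dimension $4$ by Torelli, a generic point of $Z$ is the Jacobian of a smooth genus-$4$ curve. For the reducible/boundary cases one uses that a supersingular abelian fourfold which is a product $A_1 \times A_2$ of lower-dimensional supersingular ppav's, or a Jacobian of a compact-type stable curve with a node, lies on a sublocus of dimension strictly smaller than a generic point would allow — here one invokes $\dim \cS_1 = 0$, $\dim \cS_2 = 1$, $\dim \cS_3 = 2$ and the additivity of these dimensions under products, giving at most $2 + 1 = 3 < 4$ for any proper product decomposition, hence the generic point of each $4$-dimensional component is indecomposable, and then irreducible, hence a Jacobian.

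Next, from Theorem \ref{thm:genus4} the generic automorphism group of each component of $\cM_4^{ss}$ is trivial, i.e. $\Aut(C) = \{\mathrm{id}\}$ for $C$ generic in such a component. By the Torelli theorem in the form relating $\Aut(C)$ and $\Aut(\cJ_C, \lambda_C)$ — namely $\Aut(\cJ_C, \lambda_C) \cong \Aut(C)$ when $C$ is hyperelliptic and $\Aut(\cJ_C, \lambda_C) \cong \Aut(C) \times \{\pm 1\}$ when $C$ is non-hyperelliptic — the generic automorphism group of the corresponding component of $\cS_4$ is exactly $\{\pm 1\}$ (the non-hyperelliptic case being the generic one in genus $4$; the hyperelliptic supersingular locus has smaller dimension and its closure cannot be a whole component). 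Combining this over all components of $\cS_4$, each of which we have shown to be generically a Jacobian locus, yields that every component of $\cS_4$ has generic automorphism group $\{\pm 1\}$, which is exactly Oort's conjecture \ref{conj:oort} for $g = 4$ and $p > 2$.

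The main obstacle is the first step: controlling which components of $\cS_4$ actually contain a Jacobian of a \emph{smooth} curve at their generic point, since the Torelli morphism is not dominant in genus $4$. The dimension bookkeeping above — that products of supersingular ppav's and Jacobians of singular compact-type curves sweep out only loci of dimension $\le 3$, strictly below the pure dimension $4$ of $\cS_4$ — is what makes this work, but it must be checked carefully against the component count of $\cS_g$ in low genus coming from \cite[Theorem 4.9]{lioort}, and one must ensure no $4$-dimensional component of $\cS_4$ is entirely contained in the (closure of the) non-Jacobian locus. Once this is in place, the passage to automorphism groups via Torelli is formal.
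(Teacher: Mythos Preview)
Your overall strategy---reduce to Theorem \ref{thm:genus4} via Torelli and a dimension count excluding products and hyperelliptic curves---is the same as the paper's, but your first step contains a genuine error. You assert that in dimension $4$ the locus of indecomposable non-Jacobians is empty ``by Torelli,'' and conclude that the generic point of each component of $\cS_4$ is a Jacobian. This is false: for $g = 4$ the Torelli morphism is \emph{not} dominant; its image $\cJ_4$ is a proper divisor in $\cA_4$ (the Schottky divisor), so a generic indecomposable principally polarized abelian fourfold is \emph{not} a Jacobian. There is no a priori reason why a $4$-dimensional component $\Gamma$ of $\cS_4$ should lie inside $\cJ_4$, and your argument gives none.

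The paper repairs exactly this gap with one extra input: by \cite{igusa} the Torelli locus $\cJ_4$ is an \emph{ample} divisor in $\cA_4$, hence meets every positive-dimensional closed subvariety. Thus each component $\Gamma$ of $\cS_4$ satisfies $\cJ_4 \cap \Gamma \neq \o$, and by the codimension inequality every component of $\cJ_4 \cap \Gamma$ has dimension at least $3$. The paper then runs your dimension bookkeeping on $\cJ_4 \cap \Gamma$ rather than on $\Gamma$ itself: the product locus has dimension $2$ and the hyperelliptic supersingular locus has dimension $\le 2$ (Lemma \ref{lem:oort_hess}), so the generic point of each component of $\cJ_4 \cap \Gamma$ is the Jacobian of a smooth non-hyperelliptic curve. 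If $\Gamma$ had generic automorphism group larger than $\{\pm 1\}$, semicontinuity would force the same at these generic points, and Torelli would then produce a $\ge 3$-dimensional locus in $\cM_4^{ss}$ of curves with non-trivial automorphisms, contradicting Theorem \ref{thm:genus4}. So the only missing ingredient in your sketch is the ampleness of $\cJ_4$; once you invoke it and work with $\cJ_4 \cap \Gamma$ instead of the generic point of $\Gamma$, the rest of your outline goes through.
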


\begin{proof}
By \cite{igusa}, the Torelli locus $\cJ_4 = j(\cMbar_4)\cap \cA_4$ is an ample divisor in $\cA_4$. Therefore, similarly as in \cite[Proposition 2.4]{shankartsimerman}, we observe that $\cJ_4$ has a non-empty intersection with every positive dimensional closed subvariety of $\cA_4$. In particular, $\cJ_4$ intersects every component $\Gamma$ of the supersingular locus $\cS_4 \subseteq \cA_4$. 

A dimension count tells us that every generic point of $\cJ_4 \cap \Gamma$ represents a Jacobian of a smooth non-hyperelliptic curve $C$. Indeed, recall that the codimension of an intersection of subvarieties of $\cA_4$ is at most the sum of the codimensions. Therefore, as was already concluded in \cite[Theorem~1.1]{pries23}, the dimension of any component of $\cJ_4 \cap \Gamma$ is at least~$3$, while the locus $\Pi$ of products of supersingular Jacobians of lower dimension in $\cA_4$ has dimension $\dim \Pi = 2$. This implies that every generic point of the intersection corresponds to a Jacobian of a smooth curve $C$. The curve $C$ is non-hyperelliptic by Lemma~\ref{lem:oort_hess} below.

By Torelli's theorem \cite[Theorem 12.1]{milne}, for a smooth non-hyperelliptic curve $C$, it holds that $\mathrm{Aut}(\cJ_C) \cong \mathrm{Aut}(C)\times \{\pm 1\}$. Therefore, if there is a component of the supersingular locus $\cS_4$ whose generic automorphism group strictly contains $\{\pm 1\}$, it would follow that there is a component of the supersingular locus of smooth curves $\cM_4^{ss}$, whose generic automorphism group is non-trivial. In Theorem \ref{thm:genus4}, we show this cannot be the case when $p>2$, so the result follows. 
\end{proof}

We were informed that another proof of Oort's conjecture \ref{conj:oort} for $g = 4$ was previously obtained by Karemaker and Yu (\cite{vkcfy}), using different techniques than ours. Note that Corollary \ref{cor:main_oort_a4} does not imply Theorem \ref{thm:genus4}. Namely, Oort's conjecture~\ref{conj:oort} might hold for some pair $(g, p)$, while all smooth supersingular curves of genus $g$ in characteristic $p$ have non-trivial automorphism groups, giving a negative answer to Open problem \ref{prob:oort} for this pair $(g, p)$. See also Remark~\ref{rem:char2}, where we comment on the case $g = 4$ and $p = 2$.

In Section \ref{sec:genus3}, we apply our ideas described above to present a new proof of Oort's conjecture~\ref{conj:oort} in the case $g = 3$ and $p>2$. Finally, we present a proof we have not found in the literature of a well-known result by Oort in \cite{oort_hess} that there are no smooth supersingular hyperelliptic curves of genus $3$ in characteristic $2$. 

\subsection*{Acknowledgment}
The author is grateful to Carel Faber and Valentijn Karemaker for all the discussions and valuable comments and to Arizona Winter School, his project group, Steven Groen, and Rachel Pries for helpful conversations about double covers and motivation for studying them.  The author is supported by the Mathematical Institute of Utrecht University.

\section{Invariants in characteristic $p>0$}
\label{sec:invariants}
From now on, let $k$ be an algebraically closed field of characteristic $p>0$, and denote by $\sigma: k \to k$ the Frobenius of $k$.  The $\prank$ of a stable curve $C$ of genus $g\geq 2$, denoted by $f_C$, is the semisimple rank of the $\sigma$-linear Frobenius operator $F$ on $H^{1}(C, \O_C)$, i.e., $$f_C = \rank_{k}F^{g}|_{H^1(C, \O_C)};$$ note that $0 \leq f_C \leq g$. This definition generalizes the well-known notion of the $p$-rank of an abelian variety $A$ over $k$ as the number $f_A$ such that $\# A[p](k) = p^{f_A}$ for the choice $A = \cJ_C$, where $C$ is a smooth curve and $f_C = f_{\cJ_C}$. It holds that $$C \text{ is supersingular } \implies f_C = 0,$$
while the reverse implication does not hold for $g \geq 3$. By \cite[Theorem 2.3]{fabervdgeer}, the locus $V_f\cMbar_g$ of all stable curves $C$ of genus $g$ with $f_C \leq f$ is pure of dimension 
\begin{equation}
\dim V_f\cMbar_g = 2g - 3 + f.
\label{eqn:fvdg}
\end{equation}

Let $A$ be a principally polarized abelian variety (p.p.a.v.) over $k$ of dimension $g$. First, let $A[p^{\infty}] = \dirlim A[p^n]$ be its $p$-divisible group. By the Dieudonn\'e-Manin classification \cite{manin}, there are certain $p$-divisible groups $G_{c, d}$ for $c, d\geq 0$ relatively prime integers, such that there is an isogeny of $p$-divisible groups 
\begin{equation}
A[p^{\infty}] \sim \bigoplus_{\lambda = \frac{d}{c + d}}G_{c, d}
\label{eqn:dm_class}
\end{equation}
for a unique choice of \textit{slopes} $\lambda$. The \textit{Newton polygon} of $A$ is the collection of all slopes $\lambda$ that occur in \eqref{eqn:dm_class}, counted with multiplicities. $A$  is supersingular if and only if $A[p^{\infty}] \sim (G_{1, 1})^g$. 

Now, let $A[p]$ be the polarized $p$-torsion group subscheme of $A$ and let $\mathbb{D}(A) = \mathbb{D}(A[p])$ be its polarized Dieudonn\'e module.  By \cite[Section 5]{oort} or \cite[Section 2]{vdgeercycle}, there is a $1$-to-$1$ correspondence between the isomorphism classes of all $\mathbb{D}(A)$ and the Young diagrams $$\mu = [\mu_1, \mu_2, \ldots, \mu_n],$$ for some $n \in \{0, 1, \ldots, g\}$ and $\mu_i$ for $i \in \{1, \ldots, n\}$ with $g\geq \mu_1 > \ldots \mu_n > 0$. ($\mu = \o$ if $n = 0$.) We call the $\mu$ associated with $A$ via the correspondence above the \textit{Ekedahl-Oort type} of $A$ and write $\mu(A) = \mu$. Denote by $Z_{\mu}$ the locus in $\cA_g$ consisting of (isomorphism classes of) all p.p.a.v.'s $A$ with $\mu(A) = \mu$, and by $\overline{Z}_{\mu}$ its closure in $\cA_g$. By \cite[Corollary 11.2, Theorem 1.2]{oort}, each $Z_{\mu}$ is a locally closed subset, pure of codimension $\sum_{i = 1}^n \mu_i$ in $\cA_g$, and it is quasi-affine, so it cannot contain a positive-dimensional complete subvariety. 
Furthermore, by \cite[Proposition 11.1]{oort}, if we introduce a partial ordering by $$\mu = [\mu_1, \ldots, \mu_n] \geq \mu'= [\mu'_1, \ldots, \mu'_m] \text{ if } n\leq m \text{ and } \mu_i \leq \mu_i' \text{ for all } 1\leq i\leq n, $$ as long as $g  \geq \max\{\mu_1, \mu_1'\}$, we get 
\begin{equation*}
\mu' \leq \mu \quad \implies Z_{\mu'} \subseteq \overline{Z}_{\mu} \text{ in } \cA_g.
\end{equation*}
If $\mu(A) = [\mu_1, \mu_2, \ldots, \mu_n]$, then we can compute the $p$-rank of $A$ as $f_A = g - \mu_1$. Finally, if $\mu(A) = [g, g-1, \ldots, 2, 1]$, then $A\cong E^g$ with $E$ a supersingular elliptic curve and we say that such an $A$ is \textit{superspecial}. A stable curve $C$ is \textit{superspecial} if its Jacobian $\cJ_C$ is superspecial; note that there are at most finitely many smooth superspecial curves of fixed genus $g\geq 2$ in characteristic $p>0$. 
Here are some properties when $g = 4$. 

\begin{lem}[{\cite[Proposition 5.13]{iky},  \cite[4.4]{pries_eo_classification}, \cite[Remark 5.13]{elkinpries}}]
The Ekedahl-Oort strata in $\cA_4$ whose elements have $p$-rank $0$ are precisely $Z_{\mu}$ for those $\mu$ appearing in the diagram below,  
\begin{small}
\begin{center}
\begin{tikzcd}
	&&& {[4, 2, 1]} \\
	{[4, 3, 2, 1]} & {[4, 3, 2]} & {[4, 3, 1]} && {[4, 2]} & {[4, 1]} & {[4]}, \\
	&&& {[4, 3]}
	\arrow[from=2-1, to=2-2]
	\arrow[from=2-2, to=2-3]
	\arrow[from=2-3, to=1-4]
	\arrow[from=2-3, to=3-4]
	\arrow[from=3-4, to=2-5]
	\arrow[from=1-4, to=2-5]
	\arrow[from=2-5, to=2-6]
	\arrow[from=2-6, to=2-7]
\end{tikzcd}  
\end{center}
\end{small}
where there is an arrow $\mu \to \mu'$ if and only if $Z_{\mu} \subseteq \overline{Z_{\mu'}}$. Furthermore, let $A$ be any p.p.a.v. of dimension $4$ whose Ekedahl-Oort type equals $\mu$: 
\begin{itemize}
    \item if $\mu \in \{[4], [4, 1], [4, 3, 1]\}$, then $\mathbb{D}(A)$ is indecomposable, i.e., it is not a product $\mathbb{D}(A_1) \oplus \mathbb{D}(A_2)$ for any two positive-dimensional p.p.a.v.'s $A_1$ and $A_2$;
    \item if $\mu = [4, 2]$, then $\mathbb{D}(A) \cong \mathbb{D}(A_1)\oplus \mathbb{D}(A_2)$ with $A_1$ and $A_2$ two p.p.a.v.'s such that $\dim A_1 = 1$, $\dim A_2 = 3$, $\mu(A_1) = [1]$, and $\mu(A_2) = [3]$;
    \item if $\mu = [4, 3]$, then $\mathbb{D}(A) \cong \mathbb{D}(A_1)\oplus \mathbb{D}(A_1)$ with $A_1$ a p.p.a.v. with $\dim A_1 = 2$ and $\mu(A_1) = [2]$;
    \item if $\mu = [4, 2, 1]$, then $\mathbb{D}(A) \cong \mathbb{D}(A_1)\oplus \mathbb{D}(A_2)$ with $A_1$ and $A_2$ two p.p.a.v.'s such that $\dim A_1 = 1$, $\dim A_2 = 3$, $\mu(A_1) = [1]$, and $\mu(A_2) = [3, 1]$.
\end{itemize}
\label{lemma:eo_genus4}
\end{lem}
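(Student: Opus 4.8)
\emph{Approach.} The statement naturally splits into two parts: the enumeration of the $p$-rank-$0$ Ekedahl--Oort strata of $\cA_4$ together with their closure relations, and the decomposition type of $\mathbb{D}(A)$ on each of them. I would establish them separately.

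\emph{The strata and the poset.} Since $f_A = g - \mu_1$, a p.p.a.v.\ $A$ of dimension $4$ has $p$-rank $0$ precisely when $\mu(A) = [\mu_1,\dots,\mu_n]$ with $\mu_1 = 4$; listing the strict partitions with all parts in $\{1,2,3,4\}$ gives exactly the eight displayed types, with codimensions $\sum_i \mu_i$ equal to $4,5,6,7,7,8,9,10$. In particular $[4,3]$ and $[4,2,1]$ are the unique pair among the eight with equal codimension, and they are incomparable for Oort's partial order; computing the covering relations of that order on the eight types by hand reproduces exactly the displayed arrows. Finally $\mu' \le \mu$ implies $Z_{\mu'} \subseteq \overline{Z_\mu}$, and conversely none of the eight strata lies in the closure of an incomparable one: for every pair but $\{[4,3],[4,2,1]\}$ this is forced by the codimensions, and for that pair by the irreducibility of Ekedahl--Oort strata (alternatively, all of this is already in \cite{iky}). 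Hence the arrows are exactly the relations $Z_\mu \subseteq \overline{Z_{\mu'}}$.

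\emph{The Dieudonné modules.} The Ekedahl--Oort type of $A$ is by definition the isomorphism class of the polarized group scheme $A[p]$, equivalently of $\mathbb{D}(A)$, and for $f_A = 0$ the operators $F$ and $V$ act nilpotently on it. Any orthogonal direct-sum decomposition of such a polarized module makes the $a$-numbers of the factors add, and each nonzero polarized factor on which $F$ and $V$ are nilpotent has $a$-number $\ge 1$ (it is not ordinary). Since the $a$-number of a p.p.a.v.\ with Ekedahl--Oort type $[\mu_1,\dots,\mu_n]$ equals $n$, this already shows $[4]$ is indecomposable. Moreover the polarized $F,V$-nilpotent module attached to a $d$-dimensional p.p.a.v.\ with $a$-number $1$ is unique, namely the one of type $[d]$; a short classification in dimensions $\le 3$ --- whose only nontrivial point is that of the two types $[3,1],[3,2]$ with $a$-number $2$ exactly one is decomposable, and it is $[3,2] \cong \mathbb{D}_{[1]} \oplus \mathbb{D}_{[2]}$ (immediate from the canonical bases) --- then shows that every decomposable polarized $F,V$-nilpotent module attached to a $4$-dimensional p.p.a.v.\ is one of the five pairwise non-isomorphic orthogonal sums $\mathbb{D}_{[1]}^{\oplus 4}$, $\mathbb{D}_{[1]}^{\oplus 2} \oplus \mathbb{D}_{[2]}$, $\mathbb{D}_{[2]}^{\oplus 2}$, $\mathbb{D}_{[1]} \oplus \mathbb{D}_{[3]}$, $\mathbb{D}_{[1]} \oplus \mathbb{D}_{[3,1]}$, where $\mathbb{D}_\nu$ denotes the polarized Dieudonné module of Ekedahl--Oort type $\nu$.

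\emph{Conclusion and the main obstacle.} Computing the Ekedahl--Oort type of each of these five modules --- from the explicit canonical presentations of $\mathbb{D}_{[1]}, \mathbb{D}_{[2]}, \mathbb{D}_{[3]}, \mathbb{D}_{[3,1]}$ --- gives $[4,3,2,1]$, $[4,3,2]$, $[4,3]$, $[4,2]$, $[4,2,1]$ respectively, the first being the superspecial type. It then follows immediately that $[4],[4,1],[4,3,1]$ are indecomposable and that the remaining three assertions hold, with factor dimensions $1,3$; $2,2$; $1,3$ and factor types as stated. I expect the genuinely laborious step to be precisely these five computations: having the canonical Dieudonné-module presentations of the pieces $[1],[2],[3],[3,1]$ in hand and reading off the Ekedahl--Oort type of a direct sum from the resulting canonical filtration (equivalently, via the combinatorial product rule for final sequences). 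This is exactly the content tabulated in \cite{iky}, \cite{pries_eo_classification}, and \cite{elkinpries}, which we follow; once it is in place, the enumeration, the poset, and the determination of the indecomposable types are all formal.
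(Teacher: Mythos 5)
The paper does not prove this lemma at all: it is stated as a citation to \cite{iky}, \cite{pries_eo_classification}, and \cite{elkinpries}, so there is no internal proof to compare against. Your reconstruction is correct and matches what those references actually contain. The skeleton is right: $p$-rank $0$ forces $\mu_1=4$, the eight strict partitions give the eight strata, the partial order on them is a chain except for the single incomparable pair $\{[4,3],[4,2,1]\}$, and additivity of the $a$-number over orthogonal sums reduces the decomposition question to listing the five possible sums of indecomposable local-local pieces of dimension $\le 3$ and computing their types. Your assignment of types $[4,3,2,1]$, $[4,3,2]$, $[4,3]$, $[4,2]$, $[4,2,1]$ to $\mathbb{D}_{[1]}^{\oplus 4}$, $\mathbb{D}_{[1]}^{\oplus 2}\oplus\mathbb{D}_{[2]}$, $\mathbb{D}_{[2]}^{\oplus 2}$, $\mathbb{D}_{[1]}\oplus\mathbb{D}_{[3]}$, $\mathbb{D}_{[1]}\oplus\mathbb{D}_{[3,1]}$ agrees with the cited tables, and you correctly identify those five computations as the only non-formal content, which you (like the paper) ultimately defer to the references. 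One small correction: for the equal-codimension pair $\{[4,3],[4,2,1]\}$ you should not appeal to irreducibility of Ekedahl--Oort strata (they need not be irreducible); the purity statement already suffices, since if $Z_{[4,3]}\subseteq\overline{Z_{[4,2,1]}}$ with both pure of codimension $7$, a generic point of a component of $Z_{[4,3]}$ would be a generic point of a component of $\overline{Z_{[4,2,1]}}$ and hence lie in $Z_{[4,2,1]}$, contradicting disjointness of the strata. With that adjustment your outline is a complete and faithful account of the lemma.
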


\section{Loci of curves with automorphisms and quotient curves}
\label{sec:pre}

Let $C$ be a smooth curve of genus $g$ over an algebraically closed field $k$ of characteristic $p>0$ and assume that there exists an automorphism $\gamma$ of $C$, which is of prime order $l > 0$. For a moment, let us assume that $l\neq p$. Denote $G = \<\gamma\> \cong \Z/l\Z$. Then, there exists $D = C/G$, the quotient of $C$ by $G$, which is a smooth curve. Its function field $\kappa(D)$ is the subfield of $\kappa(C)$ fixed by $G$, i.e., $\kappa(D) = \kappa(C)^{G}$. Consider the canonical map $$C \to D = C/G.$$ Its local description tells us that the ramification points of this map are exactly the points of $C$ with a non-trivial stabilizer in $G$ so that the ramification indices are either $1$~or~$l$. In other words, this map is a $(\Z/l\Z)$-cover. The Riemann-Hurwitz formula tells us that $2g_C - 2 = l(2g_D - 2) + |B|(l - 1)$, where $g_C$ and $g_D$ are the genera of $C$ and $D$, and $B$ is the set of branch points. Given a smooth curve $D$ of genus $g_D\geq 0$ and a finite set $B$ of points on $D$, there are (at most) finitely many $(\Z/l\Z)$-covers $C \to D$ whose sets of branch points coincide with $B$. One computes the genus of $C$ using the mentioned formula.

We recall the following bound on the dimension of the locus of curves with automorphisms obtained by Achter, Glass, and Pries using the description above, which also holds for $l = p$. 

\begin{lem}[{\cite[Lemma 2.1]{agp}}] Let $l>0$ be any prime number, and let $\cM_g^l \subseteq \cM_g$ be the closed locus of curves that admit an automorphism of order $l$ and suppose that $\Gamma_l$ is an irreducible component of $\cM_g^l$ with generic point $\eta$. If $D$ is the quotient of $C_{\eta}$ by a group of order $l$, and $g_D$ and $f_D$ are respectively its genus and $\prank$, then $$\dim \Gamma_l \leq 2(g - g_D)/(l - 1) + f_D - 1.$$     
\label{lem:agp_dim}
\end{lem}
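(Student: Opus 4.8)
The plan is to reconstruct the argument of \cite{agp}: bound $\dim\Gamma_l$ by the dimension of a moduli space of the $(\Z/l\Z)$-covers producing curves in $\Gamma_l$, and then exploit the $p$-rank constraint on the base together with Riemann--Hurwitz.

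First I would pass to the generic point $\eta$ of $\Gamma_l$ and fix an order-$l$ automorphism $\gamma$ of $C := C_\eta$; since a curve has only finitely many order-$l$ automorphisms, replacing $\Gamma_l$ by the normalization of the finite cover parametrizing pairs (curve, order-$l$ automorphism) does not change dimensions. Set $D = C/\langle\gamma\rangle$, of genus $g_D$ and $p$-rank $f_D$, and let $B\subset D$ be the branch locus of $\pi\colon C\to D$; note that $|B|$ is constant along $\Gamma_l$. When $l\neq p$ the cover $\pi$ is tame, so the $(\Z/l\Z)$-covers of a fixed $D$ with branch locus exactly a fixed $B$ form a finite set, as recalled in Section~\ref{sec:pre}. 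Hence the assignment $C\mapsto(D,B)$ realizes $\Gamma_l$ as dominated, with finite generic fibres, by the moduli stack of pairs $(D,B)$ with $D$ a smooth genus-$g_D$ curve of $p$-rank at most $f_D$ and $B$ a reduced divisor of degree $|B|$ on $D$; so $\dim\Gamma_l$ is at most the dimension of that stack, and it remains to compute the latter and to determine $|B|$.

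For $l\neq p$, Riemann--Hurwitz gives $2g-2 = l(2g_D-2) + |B|(l-1)$, so $|B| = \bigl(2g-2-l(2g_D-2)\bigr)/(l-1)$. For the moduli dimension, the map forgetting $B$ to the moduli of genus-$g_D$ curves of $p$-rank at most $f_D$ has fibres of dimension $|B|$, and by \eqref{eqn:fvdg} (from \cite{fabervdgeer}) its target has dimension at most $2g_D-3+f_D$ when $g_D\geq 2$; thus $\dim\Gamma_l\leq 2g_D-3+f_D+|B|$, and a short computation shows this equals $2(g-g_D)/(l-1)+f_D-1$. The two low-genus bases are handled directly: if $g_D=0$ then $f_D=0$ and the moduli of $(\P^1,B)$ has dimension $|B|-3$; if $g_D=1$ the moduli of $(D,B)$ with $D$ of $p$-rank $f_D$ has dimension $|B|+f_D-1$, the $-1$ accounting for the translation automorphisms of an elliptic curve. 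In each case inserting the value of $|B|$ yields the asserted bound.

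It remains to treat $l=p$, which I expect to be the main obstacle. Then $\pi$ is wildly ramified, Riemann--Hurwitz becomes $2g-2 = p(2g_D-2)+\sum_{b\in B}(d_b+1)(p-1)$ with conductors $d_b\geq 1$ prime to $p$, and the $(\Z/p\Z)$-covers of a fixed $D$ with fixed branch locus and fixed conductor profile no longer form a finite set but a family of positive dimension, governed by the local deformation theory of Artin--Schreier covers. The strategy is the same: bound the dimension of that family in terms of the $d_b$, add the contribution of the base (again at most $2g_D-3+f_D$, respectively the low-genus values) and of the branch configuration, and check that the extra moduli coming from the higher ramification is exactly cancelled by its contribution to $2g-2$, so that the net inequality is unchanged. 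Apart from this wild-ramification bookkeeping, the only other delicate point is the $g_D\leq 1$ case, where the automorphism group of the base is positive-dimensional and must be subtracted; for $g_D\geq 2$ and $l\neq p$ the proof is just the clean count of the previous paragraph.
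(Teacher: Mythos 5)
This lemma is imported verbatim from \cite{agp}; the paper gives no proof of its own, so the only meaningful comparison is with the original argument of Achter--Glass--Pries, and your reconstruction follows exactly that route. Your tame case is complete and correct: the finiteness of $(\Z/l\Z)$-covers with prescribed branch locus reduces everything to the dimension of the space of pairs $(D,B)$ with $D$ of $p$-rank $\leq f_D$ (lower semicontinuity of the $p$-rank guarantees the whole image lands there), and the identity $2g_D-3+f_D+|B|=2(g-g_D)/(l-1)+f_D-1$ together with your $g_D=0$ and $g_D=1$ adjustments checks out; the unramified case $|B|=0$ is also covered, the covers then being classified by the finite group $\cJ_D[l]$.

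The one genuine shortfall is the wild case $l=p$, which you correctly flag as the crux but then leave as a plan: ``check that the extra moduli coming from the higher ramification is exactly cancelled'' is the entire content of that case and is asserted rather than carried out. This matters here, because the paper does invoke the lemma with $l=p$ (e.g.\ item (1) of the proof of Theorem \ref{thm:genus4} when $l=p>3$, and the first bullet of Proposition \ref{prop:genus3}). The verification is routine and your plan does close. From $2g-2=p(2g_D-2)+\sum_{b\in B}(d_b+1)(p-1)$ one gets
$$\frac{2(g-g_D)}{p-1}+f_D-1=\bigl(2g_D-3+f_D+|B|\bigr)+\sum_{b\in B}d_b,$$
so it suffices to bound the fibre of the map from covers to pairs $(D,B)$ by $\sum_{b}d_b$. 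That follows from Artin--Schreier theory: the covers with conductor data $(d_b)_b$ are given by classes of functions $f\in H^0\bigl(D,\O_D(\sum_b d_b\, b)\bigr)$ modulo $\wp$-equivalence, the Riemann--Roch bound gives $h^0\leq \sum_b d_b+1$, and at least the constants die in the quotient since $\wp(k)=k$ for $k$ algebraically closed. With that paragraph inserted, your argument is a correct proof of Lemma \ref{lem:agp_dim} along the same lines as the cited source.
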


\subsection{Cyclic covers of the projective line}
\label{sec:double0}

In this subsection, we consider the cyclic covers of the projective line $\P^1$ of degrees $2$, $3$, and $l = p$, which correspond to hyperelliptic, cyclic trigonal, and Artin-Schreier curves respectively. Recall that there is a unique projective automorphism that sends any triple of distinct points on $\P^1$ to the triple $(0, 1, \infty)$.

First, we mention a result about smooth \textit{hyperelliptic curves} $C$, i.e., the curves with an automorphism $\iota \in  \mathrm{Aut}(C)$ of degree $2$, such that $C/\<\iota\> \cong \P^1$. Let $\cH_g$ denote the locus of all smooth hyperelliptic curves of genus $g$ in characteristic $p>2$. It is well known that $$\dim \cH_g = 2g - 1,$$ which corresponds to the choice of $2g + 2$ branch points of the double cover $C \to \P^1$ with $C$ a smooth hyperelliptic curve of genus $g$, and fixing three of the branch points. Similarly to \eqref{eqn:fvdg}, it was shown in \cite[Theorem 1]{glasspries} that the dimension of any $p$-rank $\leq f$ locus $V_f\cH_g$ of $\cH_g$ equals $$\dim V_f\cH_g = g - 1 + f.$$ Denote by $\cH_g^{ss}$ the locus of supersingular curves in $\cH_g$ (possibly empty for $g\geq 4$) and recall $\cH_{g}^{ss} \subseteq V_0\cH_g$. The following result is due to Oort for $g = 3$ and Achter-Pries for $g\geq 4$.

\begin{lem}[{\cite[Theorem 1.12]{oort_hess}}, {\cite[Corollary 3.16]{ap}}, $p>2$] Let $\cH_g^{ss}$ be the locus of smooth hyperelliptic curves of genus $g\geq 3$ which are supersingular. Then $\dim \cH_3^{ss} = 1$ and $\dim \cH_g^{ss} \leq g - 2$ for $g\geq 4$.
\label{lem:oort_hess}
\end{lem}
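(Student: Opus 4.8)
For the genus $3$ case, the inequality $\dim \cH_3^{ss} \leq 1$ follows directly from the $p$-rank formula $\dim V_0\cH_3 = g-1 = 2$ combined with a refinement: I would argue that the superspecial locus accounts for the "excess" and that the supersingular-but-not-superspecial hyperelliptic curves of genus $3$ form a locus of dimension at most $1$. Concretely, the strategy is to stratify $\cH_3^{ss}$ by Ekedahl--Oort type (equivalently, by the Dieudonné module $\mathbb{D}(\cJ_C)$) and bound the dimension of each stratum. Since $\cH_3^{ss} \subseteq V_0 \cH_3$ and $\dim V_0\cH_3 = 2$, one only needs to exclude the possibility that a full two-dimensional component of $V_0\cH_3$ is supersingular; this is done by noting that a generic $p$-rank-zero hyperelliptic curve of genus $3$ has Newton polygon strictly above the supersingular one (its Newton polygon is the one with slopes $1/3, 2/3$ together with $1/1$... more precisely it is not $(G_{1,1})^3$), using the known structure of the $p$-rank stratification of $\cH_3$. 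The reverse inequality $\dim \cH_3^{ss} \geq 1$ — i.e.\ that the supersingular locus is genuinely $1$-dimensional and not just a point — comes from an explicit family, e.g.\ Oort's construction, or from the fact that $\cH_3^{ss}$ must meet the $1$-dimensional supersingular locus in $\cA_3$ in a positive-dimensional set by an intersection-theoretic argument inside $\cA_3$ analogous to the one used in Corollary~\ref{cor:main_oort_a4}; I would cite \cite{oort_hess} for this lower bound rather than reprove it.

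For $g \geq 4$, the bound $\dim \cH_g^{ss} \leq g-2$ is strictly sharper than the $p$-rank bound $\dim V_0\cH_g = g-1$, so a genuinely new input is needed, and this is the part I expect to be cited wholesale from \cite[Corollary 3.16]{ap} rather than reproved. The idea of Achter--Pries is that for a supersingular abelian variety the $p$-divisible group must be isogenous to $(G_{1,1})^g$, which forces the second Frobenius-semisimple invariant (beyond the $p$-rank, which is already $0$) to impose an additional codimension-$1$ condition on the hyperelliptic locus; equivalently, the a-number or the Newton polygon jump gives one more codimension inside $\cH_g$. In the write-up I would state that we invoke \cite[Corollary 3.16]{ap} directly, since Theorem~\ref{thm:genus4} only uses this bound as a black box when ruling out the hyperelliptic case of an automorphism $\iota$ with $C/\langle\iota\rangle \cong \P^1$.

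The main obstacle, if one wanted a self-contained proof, would be the $g\geq 4$ case: proving that supersingularity (as opposed to mere $p$-rank zero) cuts down the dimension by at least one more inside $\cH_g$ requires control of the Newton polygon stratification restricted to the hyperelliptic locus, which is subtle because the hyperelliptic locus is not in "general position" with respect to the Newton stratification of $\cA_g$. Since the paper's logic only needs the inequality and \cite{ap} supplies it, I would simply record the citation; the only original content I would include here is the genus-$3$ dimension count above, presented briefly, with the lower bound deferred to \cite{oort_hess}.
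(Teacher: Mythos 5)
Your proposal ultimately defers both parts to the cited sources --- \cite[Theorem 1.12]{oort_hess} for $g=3$ and \cite[Corollary 3.16]{ap} for $g\geq 4$ --- which is exactly what the paper does: Lemma~\ref{lem:oort_hess} is stated as a citation with no proof given. Your added heuristics (the generic Newton polygon of a $p$-rank-$0$ genus-$3$ hyperelliptic Jacobian, the extra codimension forced by supersingularity for $g\geq 4$) are reasonable motivation but are not load-bearing, so the treatment matches the paper's.
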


Denote by $\mathcal{T}_g$ the locus of smooth \textit{cyclic trigonal} curves of genus $g$, i.e., of smooth curves~$C$ of genus $g$ with an automorphism $\iota \in  \mathrm{Aut}(C)$ of degree $3$ such that $C/\<\iota\> \cong \P^1$. Denote $G = \<\iota\> \cong \Z/3\Z$. Assume that $p>0$ and $p \neq 3$. We can see that the cover $C \to C/G \cong \P^1$ is ramified at $g + 2$ points so that $\mathcal{T}_g$ is pure of dimension $$\dim \mathcal{T}_g =  g - 1.$$ 
Let $\xi: \{1, 2, \ldots, g + 2\} \to (\Z/3\Z)^*$ be a map of sets such that $\sum_{i = 1}^{g + 2} \xi(i) = 0 \in \Z/3\Z$ called the class vector and let $\overline{\xi} = (|\xi^{-1}(1)|, |\xi^{-1}(2)|)$ be its inertia type. By \cite[Lemma~2.3]{ap_tri}, for any irreducible component of the locus~$\mathcal{T}_{g, \mathrm{{ord}}}$ of smooth cyclic trigonal curves of genus~$g$ with ordered ramification points, there is a class vector $\xi$ as above. We denote this component by~$\mathcal{T}_{g, \mathrm{ord}}^{\xi}$. By forgetting the labeling, we get the map $\mathcal{T}_{g, \mathrm{ord}}^{\xi} \to \mathcal{T}_{g}^{\overline{\xi}}$ with $\mathcal{T}_{g}^{\overline{\xi}}\subseteq \mathcal{T}_g$. We see that~$\mathcal{T}_g$ consists of the union of such $\mathcal{T}_{g}^{\overline{\xi}}$ and note that the choice of $\xi$ corresponds to encoding the action of $G$ on the tangent spaces of $C$ at the ramification points. For more details, see \cite[Section 2.1]{ap_tri}.

\begin{lem}[\cite{ap_tri}, $p>0 \text{ and } p\neq 3$]
Every component $\Gamma$ of the locus $\mathcal{T}_g^{ss}$ of smooth supersingular cyclic trigonal curves of genus $g\geq 3$ has dimension $\dim \Gamma \leq g - 2$.
\label{lem:triel}
\end{lem}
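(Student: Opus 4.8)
The plan is to treat the finitely many inertia types separately and to reduce everything to showing that a general cyclic trigonal curve of a fixed inertia type has non‑supersingular Jacobian. Recall from the discussion preceding the lemma that $\mathcal{T}_g=\bigcup_{\overline{\xi}}\mathcal{T}_g^{\overline{\xi}}$, where $\overline{\xi}=(n_1,n_2)$ ranges over the finitely many inertia types with $n_1+n_2=g+2$, and that each $\mathcal{T}_g^{\overline{\xi}}$ is irreducible of dimension $g-1$, being the image of the irreducible $\mathcal{T}_{g,\mathrm{ord}}^{\xi}$ under the finite map that forgets the labelling. Since supersingularity is a closed condition (the Newton polygon is upper semicontinuous) and $\mathcal{T}_g^{ss}=\bigcup_{\overline{\xi}}\bigl(\mathcal{T}_g^{ss}\cap\mathcal{T}_g^{\overline{\xi}}\bigr)$, it suffices to exhibit, for each $g\ge 3$ and each admissible $\overline{\xi}$, a single smooth cyclic trigonal curve of genus $g$ and inertia type $\overline{\xi}$ whose Jacobian is not supersingular: then $\mathcal{T}_g^{ss}\cap\mathcal{T}_g^{\overline{\xi}}$ is a proper closed subset of the irreducible $(g-1)$-dimensional $\mathcal{T}_g^{\overline{\xi}}$, hence has dimension at most $g-2$, and the union over $\overline{\xi}$ gives the claim.

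To construct such a curve I would induct on $g$ by degenerating inside the trigonal family to a compact‑type stable curve. Letting the target $\P^1$ degenerate to a tree of projective lines and the $\Z/3$-cover degenerate accordingly, with ramification over the nodes of the tree, one obtains a stable cyclic trigonal curve $C_0$ that is a tree of smooth cyclic trigonal curves $C_1,\dots,C_m$ of genera summing to $g$; such a $C_0$ lies in $\overline{\mathcal{T}_g^{\overline{\xi}}}$ as soon as the inertia characters of its non‑nodal branch points add up to $\overline{\xi}$ (a locally constant invariant along the smoothing), and $\cJ_{C_0}\cong\prod_i\cJ_{C_i}$, so a general smoothing of $C_0$ is non‑supersingular provided some $\cJ_{C_i}$ is. The base of the induction is $g\in\{2,3\}$: there the signature $(r,s)$ attached to $\overline{\xi}$ satisfies $rs=g-1=\dim\mathcal{T}_g^{\overline{\xi}}$, so the natural map from $\mathcal{T}_g^{\overline{\xi}}$ to the PEL moduli space $\mathcal{X}_{r,s}$ of principally polarized $g$-folds with $\Z[\zeta_3]$-action of signature $(r,s)$ is generically finite, hence dominant, and since the $\mu$-ordinary locus of $\mathcal{X}_{r,s}$ is dense with non‑supersingular Newton polygon, a non‑supersingular curve of type $\overline{\xi}$ exists.

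I expect the main obstacle to be the combinatorial and geometric bookkeeping in the inductive step: for each $\overline{\xi}$ one must choose the degenerating tree and the inertia characters at the nodes so that $C_0$ is smoothable within $\mathcal{T}_g^{\overline{\xi}}$ and at least one component $C_i$ can be taken non‑supersingular. This is delicate when $p\equiv 2\pmod 3$, because then the unique genus‑one cyclic trigonal curve is the supersingular curve with $j=0$; for inertia types concentrated in a single character one cannot split off a single positive‑genus component without producing such a supersingular elliptic curve, and one must instead split off several components at once so that the remaining piece has genus at least two and a mixed inertia type to which the induction applies. A different route, closer to the argument of \cite{ap_tri}, bypasses degenerations altogether: embed each $\mathcal{T}_g^{\overline{\xi}}$ into $\mathcal{X}_{r,s}$, whose $\mu$-ordinary locus is dense and non‑supersingular for $g\ge 3$, and use the large geometric monodromy of the $p$-rank strata of trigonal curves to conclude that the trigonal family is not contained in the complement; the difficulty then lies in the monodromy computation for the balanced signatures $2\le r\le s$, which are the only ones occurring when $g\ge 5$.
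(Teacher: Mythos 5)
Your reduction is exactly the one the paper uses: decompose $\mathcal{T}_g$ into the finitely many irreducible pieces $\mathcal{T}_g^{\overline{\xi}}$ of dimension $g-1$, and observe that it suffices to produce a single non-supersingular curve in each piece, since the supersingular locus is then a proper closed subset of each. The divergence, and the gap, is in how that single curve is produced. The paper disposes of this in one line by invoking the proof of \cite[Corollary~3.11]{ap_tri}: every component $\mathcal{T}_{g,\mathrm{ord}}^{\xi}$ contains a curve whose Jacobian is \emph{absolutely simple}, and an absolutely simple abelian variety of dimension $g\geq 2$ cannot be supersingular (it would be isogenous to $E^g$). Your proposal instead tries to manufacture the witness by hand, and neither of your two routes is brought to completion: the degeneration induction stalls exactly where you say it does (for $p\equiv 2\pmod 3$ the only positive-genus low-degree pieces one can split off are the supersingular $j=0$ curve, and the smoothability of the chosen $C_0$ inside the closure of the \emph{fixed} inertia type $\overline{\xi}$ is asserted rather than checked), and the Shimura-variety route defers both the generic finiteness of $\mathcal{T}_g^{\overline{\xi}}\to\mathcal{X}_{r,s}$ and the identification of the $\mu$-ordinary Newton polygon as non-supersingular, which is itself delicate for balanced signatures at inert primes. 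So the existence statement on which the whole bound rests is not actually established in your write-up.

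That said, your second route is morally the content of \cite{ap_tri}: Achter--Pries prove a big-monodromy theorem for the trielliptic families, from which absolute simplicity of the generic Jacobian follows. If you replace your constructions by a citation of \cite[Corollary~3.11]{ap_tri} (or its proof), together with the observation that absolute simplicity excludes supersingularity in dimension $g\geq 2$, your argument closes and coincides with the paper's. As written, the key input is missing.
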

\begin{proof}
By the proof of \cite[Corollary 3.11]{ap_tri}, every component $\mathcal{T}_{g, \mathrm{ord}}^{\xi}$ of $\mathcal{T}_{g, \mathrm{ord}}$ contains an element whose underlying smooth curve $C$ satisfies that its Jacobian $\cJ_C$ is absolutely simple. This curve $C$ cannot be supersingular so that no generic point of $\mathcal{T}_g$ is supersingular. Therefore, $\dim \mathcal{T}_g^{ss} < \dim \mathcal{T}_g$, which proves the result.  
\end{proof}

Finally, we consider cyclic covers of the projective line $\P^1$ of degree $l = p$. A smooth curve~$C$ over $k$ that is a $(\Z/p\Z)$-cover of a projective line is usually referred to as an \textit{Artin-Schreier} $k$-curve. Denote by $\mathcal{AS}_g$ the locus of all smooth Artin-Schreier $k$-curves of genus $g\geq 2$, by $\mathcal{AS}_{g, f} \subseteq \mathcal{AS}_g$ the locus of Artin-Schreier $k$-curves $C$ with $p$-rank exactly $f_C = f$, and by $\mathcal{AS}_g^{ss} \subseteq \mathcal{AS}_g$ the supersingular locus of $\mathcal{AS}_g$. The $p$-rank stratification of $\mathcal{AS}_g$ was studied in \cite{prieszhu}. It turns out that $g = d(p-1)/2$ and $f = r(p - 1)$ for some $d\geq 1$ and $r\geq 0$; otherwise, $\mathcal{AS}_{g, f}$ is empty. For any eligible $0\leq f \leq g$, Pries and Zhu compute the number of irreducible components of $\mathcal{AS}_{g, f}$ and determine their dimensions. Below, we extract conclusions about the case $f = 0$ from their results. See also \cite[Lemma 2.2]{agp}.

\begin{lem}[{\cite[Theorem 1.1]{prieszhu}}] Let $g = d(p-1)/2$ for some $d\geq 1$ and let $f = 0$. Then, there is a unique component of the $p$-rank $0$ locus $\mathcal{AS}_{g, 0}$ if $d + 1 \equiv 0 \text{ }\mathrm{ mod }\text{ } p$, while $\mathcal{AS}_{g, 0}$ is empty otherwise. In the former case, it holds that $$\dim \mathcal{AS}_{g, 0} = d - 1 - \left \lfloor \frac{d + 1}{p} \right \rfloor.$$ 
In particular, for $p = 3$ we find that $\dim \mathcal{AS}_{3, 0} = 1$ and $\dim \mathcal{AS}_{4, 0} = 2$, and therefore $\dim \mathcal{AS}_{3}^{ss} \leq 1$ and $\dim \mathcal{AS}_{4}^{ss} \leq 2$.
\label{prieszhu}
\end{lem}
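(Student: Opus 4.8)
The plan is to recognize this as the $p$-rank-zero case of the Artin--Schreier $p$-rank stratification of Pries and Zhu, to recall just enough of their setup to read off the assertion, and then to evaluate the two numerical consequences for $p=3$ by hand.

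First I would reduce to a single branch point. An Artin--Schreier $k$-curve of genus $g$ is a $(\Z/p\Z)$-cover $\phi\colon C\to \P^1$, and if $\phi$ is branched at $s$ points, then the Deuring--Shafarevich formula gives $f_C=(s-1)(p-1)$; hence $f_C=0$ forces $s=1$. Normalizing the unique branch point to $\infty$, the curve $C$ is birational to the affine curve $y^p-y=f(x)$ for some $f\in k[x]$, and the usual Artin--Schreier substitutions $y\mapsto y+ax^m$ (which trade a monomial $cx^{pm}$ in $f$ for the lower-degree monomial $ax^m$, where $a^p=c$), together with absorbing the constant term, bring $f$ into reduced form $f=\sum_{1\le i\le D,\ p\nmid i}a_i x^i$ with $a_D\neq 0$, uniquely up to the action of the affine group $x\mapsto \alpha x+\beta$, $\alpha\in k^{\times}$, $\beta\in k$. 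Next I would match the genus: over $\infty$ the cover is totally ramified with a single ramification break at $D$, hence different exponent $(p-1)(D+1)$, so Riemann--Hurwitz reads $2g-2=-2p+(p-1)(D+1)$, i.e.\ $g=(p-1)(D-1)/2$; comparison with $g=d(p-1)/2$ forces $D=d+1$. Consequently $\mathcal{AS}_{g,0}$ is non-empty exactly when this value $D=d+1$ is a legitimate Artin--Schreier conductor, i.e.\ is prime to $p$, and in that case the admissible $f$ of degree exactly $d+1$ form an open subset of an affine space, so $\mathcal{AS}_{g,0}$ is irreducible, with a single component.

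For the dimension I would count parameters. The coefficients $a_i$ with $1\le i\le d+1$ and $p\nmid i$ number $(d+1)-\lfloor (d+1)/p\rfloor$, subject only to $a_{d+1}\neq 0$; dividing by the two-dimensional affine group, which acts on this family of covers with finite generic stabilizer, yields $\dim\mathcal{AS}_{g,0}=(d+1)-\lfloor (d+1)/p\rfloor-2=d-1-\lfloor (d+1)/p\rfloor$. Now I specialize $p=3$: for $g=3$ one has $d=3$ and $\dim\mathcal{AS}_{3,0}=3-1-\lfloor 4/3\rfloor=1$; for $g=4$ one has $d=4$ and $\dim\mathcal{AS}_{4,0}=4-1-\lfloor 5/3\rfloor=2$. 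Since any supersingular curve has $p$-rank $0$, we have $\mathcal{AS}_g^{ss}\subseteq\mathcal{AS}_{g,0}$, and the bounds $\dim\mathcal{AS}_3^{ss}\le 1$ and $\dim\mathcal{AS}_4^{ss}\le 2$ follow.

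The one genuinely delicate ingredient — the part I expect to be the main obstacle and would in the end simply attribute to \cite[Theorem 1.1]{prieszhu} — is the combination of irreducibility of $\mathcal{AS}_{g,0}$ with the exactness of the affine-group quotient, i.e.\ that a generic reduced polynomial $f$ of degree $d+1$ admits only finitely many affine changes of the coordinate $x$ preserving the isomorphism class of its cover, so that no extra dimension is lost or gained. Granting that, the argument reduces to the reduction to one branch point described above and the two $p=3$ computations, both of which are elementary.
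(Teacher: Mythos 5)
Your proposal is correct and is in substance the same as the paper's treatment: the paper offers no proof of this lemma beyond citing \cite[Theorem 1.1]{prieszhu} and then specializing to $p=3$, and your sketch is exactly the $r=0$ case of the Pries--Zhu argument (Deuring--Shafarevich to force a single branch point, Artin--Schreier reduction of $f$, Riemann--Hurwitz giving $D=d+1$, and a parameter count modulo the affine group), with the genuinely delicate finiteness/exactness of the quotient correctly attributed to the reference. One point worth flagging: your derivation yields non-emptiness of $\mathcal{AS}_{g,0}$ precisely when $p\nmid(d+1)$, i.e.\ $d+1\not\equiv 0\bmod p$, which is the condition actually appearing in \cite[Theorem 1.1]{prieszhu}; the lemma as printed states the negation (``if $d+1\equiv 0\bmod p$''), which must be a misprint, since for $p=3$ one has $d+1=4$ and $5$ for $g=3$ and $g=4$, and the lemma's own conclusions $\dim\mathcal{AS}_{3,0}=1$, $\dim\mathcal{AS}_{4,0}=2$ require these strata to be non-empty. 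Your two numerical evaluations $3-1-\lfloor 4/3\rfloor=1$ and $4-1-\lfloor 5/3\rfloor=2$ and the inclusion $\mathcal{AS}_g^{ss}\subseteq\mathcal{AS}_{g,0}$ are all correct.
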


\subsection{Double covers of curves of genus $1$}
\label{sec:double1}
Let $E$ be a fixed elliptic curve in characteristic $p>2$, that is a $1$-pointed curve of genus $1$. 
Denote by $\cM_{E, g}$ the locus of smooth curves $C$ of genus $g$  which are double covers of $E$, i.e., such that there exists a map $\pi: C \to E$ of degree~$2$. 
For $0\leq f\leq g$, denote by $V_f\cM_{E, g}$ the locus of curves $C$ in $\cM_{E, g}$ with $f_C \leq f$. 
These loci were studied in \cite{aws}. In particular, for $E$ a supersingular elliptic curve and for any $g\geq 3$ and $f$ with $0 \leq f \leq g - 1$, it was shown that the locus $V_f{\cM}_{E, g}$ is pure of dimension 
\begin{equation}
\dim V_f{\cM}_{E, g} = g - 2 + f.
\label{eqn:aws}
\end{equation}

Denote by $\cB_g$ the (bielliptic) locus of smooth curves $C$ over $k$ of genus $g$ for which there exists an elliptic curve $E$ and a map $\pi: C \to E$ of degree $2$. By the Riemann-Hurwitz formula, this map has $2g - 2$ branch points $b_1, \ldots, b_{2g - 2}$. Given an elliptic curve $E = (E, b_1)$ (up to translation, we can always assume that $b_1$ is the neutral element of~$E$), and $2g - 1$ points $b_2, \ldots, b_{2g - 2}$, there are only finitely many double covers $\pi: C \to E$ branched over $b_1, \ldots, b_{2g - 2}$. They correspond to the choice of a line bundle $L$ on $E$ such that $L^2 \cong \O_D(- \sum_{i = 1}^{2g - 2}b_i)$ as was discussed in \cite[Section 1]{mumford}. This implies $$\dim \cB_g = 2g - 2.$$ 
To present an alternative proof of Lemma \ref{lem:aws} below, we will need the notion of admissible double covers, that we adapt from \cite[Section 4.1]{acv}, similarly as in \cite[Section 2]{cp}.

\begin{dfn}
We say that $C$ is an \textit{admissible bielliptic} curve of genus $g$ if $C$ is a nodal curve of genus $g$ together with a map $\pi: C \to D$ of degree $2$, where $(D, b_1, \ldots, b_n)$ is an $n$-pointed stable curve of genus $1$ such that 
\begin{itemize}
    \item $\pi$ is finite and maps every node of $C$ to a node of $D$;
    \item the restriction $\pi^{sm}: C^{sm} \to D^{sm}$ of the map $\pi$ to the smooth loci of $C$ and $D$ is branched exactly at the marked points $b_1, \ldots, b_n$.
\end{itemize}    
\end{dfn}
Formally, we should write $C = (C, \pi: C \to (D, b_1, \ldots, b_n))$. Similarly, one can define the notions of families of admissible bielliptic curves and their isomorphisms; see \cite[Section 4.1]{acv}. This results in a proper Deligne-Mumford stack $\overline{\cB}_{g}^{adm}$ of admissible bielliptic curves of genus $g$ in characteristic $p\neq 2$. Consider the map $(C, \pi: C \to (D, b_1, \ldots, b_n)) \mapsto \overline{C}$, which is obtained by forgetting the admissible structure on $C$ and then contracting its rational components that meet the remaining components of $C$ in at most $2$ points to get~$\overline{C}$. This map induces a morphism $\overline{\cB}_{g}^{adm} \to \cMbar_g,$ whose image is the closed locus $\overline{\cB}_g$, the closure of $\cB_g$ in $\cMbar_g$. In \cite[Section 3.4]{svz}, one can find the description of the boundary $\Delta_g = \overline{\cB}_g - {\cB}_g$ in terms of stable graphs, using that every degree $2$ cover is automatically a $\Z/2\Z$-cover, so that $\overline{\cB}_{g}^{adm} \cong \overline{\cH}_{g, \Z/2\Z, (1^{2g - 2})}$, with $ \overline{\cH}_{g, G, \xi}$ the stack of admissible $G$-covers of genus $g$ with monodromy data $\xi$ considered in \cite{svz} (here with $G = \Z/2\Z$ and $\xi = (1^{2g - 2})$).

Denote by $(C_1, q_1)\cup_{q_1 = q_2} (C_2, q_2)$ the clutch of curves $C_1$ and $C_2$ with points $q_1 \in C_1$ and $q_2 \in C_2$ identified. Finally, denote by $\cB_g^{ss}$ the locus of all supersingular curves in $\cB_g$. We have the following result.

\begin{lem}[$p>2$] Every component $\Gamma$ of the locus $\cB_3^{ss}$ has dimension $\dim \Gamma \leq 1$, while every component $\Gamma'$ of the locus $\cB_4^{ss}$ has dimension $\dim \Gamma' \leq 2$.
\label{lem:aws}
\end{lem}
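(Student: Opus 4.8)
The plan is to bound $\dim \cB_g^{ss}$ by combining the dimension formula \eqref{eqn:aws} for double covers of a \emph{fixed} supersingular elliptic curve with the fact that a supersingular curve $C \in \cB_g^{ss}$ must map to an elliptic curve $E$ that is itself supersingular. Indeed, if $\pi: C \to E$ has degree $2$ with $C$ supersingular, then $\cJ_C$ is isogenous to $E^g$ up to base change; on the other hand $\pi$ induces a surjection $\cJ_C \to E$ (the norm map, or dually $\pi^*: E \to \cJ_C$ has finite kernel), so $E$ appears as an isogeny factor of a supersingular abelian variety and hence $E$ is supersingular. Since there are only finitely many supersingular elliptic curves up to isomorphism over $k$ (there are $\approx p/12$ of them), the locus $\cB_g^{ss}$ is covered by the finitely many images of the loci $V_0\cM_{E,g}^{ss} \subseteq V_0\cM_{E,g}$ as $E$ ranges over supersingular elliptic curves, plus possibly the bielliptic maps are not unique — but $\cB_g^{ss}$ is contained in $\bigcup_E (\text{image of } \cM_{E,g}^{ss})$ where the union is finite. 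By \eqref{eqn:aws} with $f = 0$, each $V_0\cM_{E,g}$ has dimension $g - 2$, and $\cM_{E,g}^{ss} \subseteq V_0\cM_{E,g}$ since supersingular implies $p$-rank $0$. Hence every component $\Gamma$ of $\cB_g^{ss}$ satisfies $\dim \Gamma \leq g - 2$: for $g = 3$ this gives $\dim \Gamma \le 1$ and for $g = 4$ it gives $\dim \Gamma' \le 2$, as claimed.

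One subtlety to address is that the map $\cM_{E,g} \to \cB_g$ that forgets the elliptic structure may not be finite if a curve $C$ admits several non-conjugate bielliptic maps to $E$ (or to different $E$'s), but this only matters for injectivity, not for the dimension bound: the image of a locus of dimension $\le g-2$ has dimension $\le g-2$, and a finite union of such loci still has dimension $\le g - 2$. So the fibers of $\cM_{E,g}^{ss} \to \cB_g^{ss}$ being possibly positive-dimensional would only \emph{help}; but in fact, for a fixed $C$, the set of degree-$2$ maps $C \to E$ up to automorphism is finite, so no issue arises either way. The genuinely load-bearing input here is \eqref{eqn:aws}, i.e., the result of \cite{aws} that $V_f\cM_{E,g}$ is pure of dimension $g - 2 + f$ for a supersingular $E$ — this is where the "$-2$" (rather than the naive "$-1$" one might expect from a codimension count in $\cB_g$, which has dimension $2g-2$) comes from, and it reflects that the supersingular condition on $E$ already costs one dimension relative to the generic bielliptic situation.

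The main obstacle — and presumably the reason the author flags an "alternative proof" using admissible double covers — is making the reduction rigorous at the level of moduli \emph{stacks} and handling the boundary/degenerations: a priori a component of $\cB_g^{ss}$ could have its generic point parametrizing a curve whose bielliptic structure degenerates, so one wants the compactified picture $\overline{\cB}_g^{adm} \cong \overline{\cH}_{g, \Z/2\Z, (1^{2g-2})}$ to control things. The cleanest route I would take: let $\Gamma$ be a component of $\cB_3^{ss}$ (resp. $\cB_4^{ss}$) with generic point $\eta$ and corresponding smooth bielliptic curve $C_\eta$ with $\pi: C_\eta \to E_\eta$; argue as above that $E_\eta$ is supersingular; since supersingular elliptic curves form a finite set, after passing to a component we may assume $E_\eta \cong E$ is constant along $\Gamma$; then $\Gamma$ lands in the image of $\cM_{E,g}^{ss} \subseteq V_0\cM_{E,g}$, so $\dim \Gamma \le \dim V_0\cM_{E,g} = g - 2$ by \eqref{eqn:aws}. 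The admissible-cover language is then only needed if one insists on working with the \emph{closure} $\overline{\cB}_g^{ss}$ and its boundary strata (e.g., to rule out that a supersingular component "escapes" to the boundary), but for the dimension bound on $\cB_g^{ss}$ itself the argument above suffices. I would also double-check the edge case $g = 3$, $p = 3$ separately, since $\ell = 2 \neq p = 3$ there so the tame double-cover theory applies without modification, and confirm that \eqref{eqn:aws} is stated for all $p > 2$ including $p = 3$.
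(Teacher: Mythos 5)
Your argument is correct and is essentially the paper's own primary justification: the proof of Lemma \ref{lem:aws} opens by observing that the finiteness of the set of supersingular elliptic curves (which must be the target of the double cover, exactly as you argue via the isogeny factor) together with \eqref{eqn:aws} immediately gives $\dim \cB_g^{ss} \le g-2$. The paper then writes out a second, self-contained argument via admissible double covers and boundary strata, but only as an alternative for the reader's convenience (the reference \cite{aws} being in preparation), so your reduction is precisely the intended one and your handling of the finiteness/image subtleties is sound.
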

\begin{proof}
Since there are only finitely many supersingular elliptic curves up to $k$-isomorphism, the formula \eqref{eqn:aws} obtained in \cite{aws} implies the result. For the reader's convenience, we offer an alternative proof here. Let $\overline{\cB}_g$ be the closure of $\cB_g$ as above, and denote $\Delta_g = \overline{\cB}_g - {\cB}_g$.

First, we prove that there are only finitely many isomorphism classes of smooth bielliptic curves $C$ of genus $2
$ with $f_C = 0$, i.e., that $\cB_2^{ss}$ is finite. Indeed, since the hyperelliptic involution $\iota$ of $C$ is unique, if we denote by $\theta \neq \iota$ a bielliptic involution on $C$, then $E = C/\<\theta\>$ and $E' = C/\<\iota \circ \theta\>$ are two supersingular elliptic curves and $C \to E \times E'$ is an isogeny of degree $2$, coprime to $p>2$. Therefore, $C$ would be a smooth superspecial curve, which shows the desired conclusion. We note that the locus $\cB_2^{ss}$ is empty for $p = 3$. This follows from \cite[Theorem~1.1]{ekedahl}, saying that there are no smooth superspecial curves of genus $2$ in characteristic $3$. (Formally, to cover this case $p= 3$ below and degenerations of the cases we present, we also use that the supersingular locus $\Delta_2^{ss}$ of $\Delta_2 = \overline{\cB_2} - \cB_2$ is finite.) 

Let $\Gamma$ be a component of $\cB_3^{ss}$ and let $\overline{\Gamma}$ be its closure in $\overline{\cB}_3$. 
Note that $\Gamma$ is quasi-affine. Namely, if $C$ is a smooth curve of genus $3$, $E$ a genus $1$ curve, $C \to E$ a double cover whose branch locus equals $B = \{b_1, b_2, b_3, b_4\}$, then $(E, b_1)$ is an elliptic curve and $b_2, b_3, b_4$ are three distinct points in $E - \{b_1\}$, and we could think of $E - \{b_1\}$ as of an affine curve. Using this property of $\Gamma$, we find that $\overline{\Gamma} \cap \Delta_3\neq \o$.
Now, the smoothness of $\cMbar_3$ implies that $\dim \Gamma \leq \dim (\overline{\Gamma} \cap \Delta_3) + 1.$ Therefore, it is enough to show that $\dim (\overline{\Gamma} \cap \Delta_3) = 0$. The explicit description of the boundary of the bielliptic locus using stable graphs from \cite[Section~3.4]{svz}, tells us that the irreducible components of $\overline{\Gamma} \cap \Delta_3$ must be either of the form~$\Delta'$ or $\Delta''$ as below:
\begin{itemize}
    \item $\Delta'$ is the closure of the locus of stable curves of the form $(C_1', q_1)\cup_{q_1 = q_2} (C_2', q_2)$ where $C_1' \in \cB_2^{ss}$ and $q_1 \in C_1'$ is one of the ramification points of the morphism $C_1' \to E$ for some elliptic curve $E$, while $(C_2', q_2)$ is a supersingular elliptic curve. (Formally, it could also happen $C_1' \in \Delta_2^{ss}$ with $q_1\in (C_1')^{sm}$ a Weierstrass point of one of the components of $C_1'$, which is the case that we cover similarly.)
    \item $\Delta''$ is the closure of the locus of stable curves of the form $$(C_1'', q)\cup_{q = q_1} (C_2'', q_1, q_2) \cup_{q_2 = q} (C_1'', q)$$ where $(C_1'', q)$ and $(C_2'', q_1)$ are supersingular elliptic curves, and $q_2 \neq q_1$ is the image of $q_1$ under the involution $\iota$ of $C_2''$ (we can always assume $\iota (q_1) \neq q_1$). 
\end{itemize}
It is clear that the loci $\Delta''$ are finite. The loci $\Delta'$ are finite as well. This follows from our conclusion that $\cB_2^{ss}$ is finite and from the fact that for each $C_1' \in \cB_2$ there are only finitely points $q_1$ such that $q_1$ is a ramification point of a double cover $C_1' \to E$ for some elliptic curve~$E$. Therefore, $\dim \cB_3^{ss} \leq 1$.

A similar argument shows that $\dim \cB_4^{ss} \leq 2$. We mention a few details. Any component~$\Gamma'$ of $\cB_4^{ss}$ is quasi-affine, so that $\overline{\Gamma'} \cap \Delta_4 \neq \o$ and $\dim \Gamma' \leq \dim (\overline{\Gamma'} \cap \Delta_4) + 1$. Now, there are three possible forms of curves in $\overline{\Gamma'} \cap \Delta_4$. In each of the three cases, we find that these loci are at most $1$-dimensional, using that $\cB_2^{ss}$ is finite and $\dim \cB_3^{ss} \leq 1$. This shows $\dim (\overline{\Gamma'} \cap \Delta_4) \leq 1$ and gives the result. 
\end{proof}

\subsection{Double covers of curves of genus $2$}
\label{sec:double2}

In the rest of this section, we consider the (bi-$2$) locus $\cD_4$ in $\cM_4$ in characteristic $p>2$, consisting of all smooth curves $C$ of genus $4$ such that there is a double cover 
\begin{equation}
\pi: C \to D,    
\label{eqn:CtoD}
\end{equation}
for some smooth curve $D$ of genus $2$. The Riemann-Hurwitz formula tells us that the double cover \eqref{eqn:CtoD} has $2$ branch points, $b_1$ and $b_2$. Given a curve $D$ and two distinct points $b_1, b_2 \in D$, there are only finitely many double covers \eqref{eqn:CtoD} and they correspond to the choice of a line bundle $L$ on $D$ such that $L^2 \cong \O_D(-b_1 - b_2)$ as was discussed in \cite[Section~1]{mumford}. Therefore, using that $\dim \cM_2 = 3$, we can conclude that $\dim \cD_4 = 5$. Let us denote by $\cD_4^{ss}$ the locus of all supersingular curves in $\cD_4$. Since the locus $\cM_2^{ss}$ of supersingular curves of genus $2$ is pure of dimension $1$ as $\overline{j(\cM_2^{ss})} = \cS_2 \subseteq \cA_2$, we find that $\dim \cD_4^{ss} \leq 3$. In fact, this section aims to prove that $\dim \cD_4^{ss} \leq 2$. We will need this estimate in the proof of Theorem~\ref{thm:genus4}.

By \cite[Corollary 1 and Corollary 2]{mumford}, there is a principally polarized abelian variety~$P$ of dimension $2$, the \textit{Prym variety} of $\pi$, such that there is an isogeny of degree~$2$, 
\begin{equation}
\cJ_C \sim \cJ_D \times P,    
\label{eqn:isogCD}
\end{equation}
with $\cJ_C$ and $\cJ_D$ the Jacobian varieties of $C$ and $D$. Therefore, $f_C = 0$ if and only if $C$ is supersingular, since $\dim \cJ_D = \dim P = 2$ so $f_P = 0 \Leftrightarrow
 P$ is supersingular and $f_D = 0 \Leftrightarrow
 D$ is supersingular.

\begin{lem}[$p>2$] Every component $\Gamma$ of the locus $\cD_4^{ss}$ of smooth supersingular curves of genus $4$ that are double covers of a genus $2$ curve has dimension $\dim \Gamma \leq 2$.
\label{lem:MD4}
\end{lem}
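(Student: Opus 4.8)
The plan is to realize $\cD_4^{ss}$ as the image of a quasi-finite morphism onto a variety of dimension $2$. To set this up, pass first to the moduli space $\widetilde{\cD}_4$ of pairs $(C,\pi)$ where $C$ is a smooth curve of genus $4$ and $\pi\colon C\to D$ is a degree $2$ morphism onto a smooth curve $D$ of genus $2$. Since $p>2$, the morphism $\pi$ is separable, hence a $\Z/2\Z$-cover, so it is determined by the associated involution of $C$; as $\Aut(C)$ is finite, the forgetful morphism $\widetilde{\cD}_4\to\cM_4$, $(C,\pi)\mapsto C$, is quasi-finite with image $\cD_4$. Writing $\widetilde{\cD}_4^{ss}$ for the preimage of $\cD_4^{ss}$, it thus suffices to prove $\dim\widetilde{\cD}_4^{ss}\le 2$.

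Next introduce the morphism $\Phi\colon\widetilde{\cD}_4\to\cM_2\times\cA_2$ given by $(C,\pi\colon C\to D)\mapsto(D,P)$, where $P$ denotes the Prym variety of $\pi$ appearing in \eqref{eqn:isogCD}. By the discussion following \eqref{eqn:isogCD}, the curve $C$ is supersingular if and only if $D$ and $P$ are both supersingular, that is, $\widetilde{\cD}_4^{ss}=\Phi^{-1}(\cM_2^{ss}\times\cS_2)$. Since $\dim\cM_2^{ss}=1$ and $\dim\cS_2=\lfloor 2^2/4\rfloor=1$, once $\Phi$ is known to be quasi-finite we obtain $\dim\widetilde{\cD}_4^{ss}\le\dim(\cM_2^{ss}\times\cS_2)=2$, which proves the lemma.

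It remains to check that $\Phi$ has finite fibres. Fix a geometric point $(D,P)$ of $\cM_2\times\cA_2$ and reconstruct, up to finitely many choices, any $(C,\pi)$ with $\Phi(C,\pi)=(D,P)$: (i) $D$ determines its Jacobian $\cJ_D$; (ii) by \cite[Corollary 1 and Corollary 2]{mumford}, $\cJ_C$ admits an isogeny of degree $2$ to (or from) the fixed abelian variety $\cJ_D\times P$, and only finitely many principally polarized abelian fourfolds are isogenous to $\cJ_D\times P$ by an isogeny of degree $2$ — the kernel being a subgroup scheme of order $2$ — so $\cJ_C$ ranges over a finite set; (iii) by Torelli's theorem \cite[Theorem 12.1]{milne}, the principally polarized abelian variety $\cJ_C$ determines the curve $C$; (iv) for fixed $C$ and $D$ there are only finitely many degree $2$ morphisms $C\to D$, since each is a $\Z/2\Z$-cover and $\Aut(C)$ is finite. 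Combining (i)--(iv) shows that the fibre of $\Phi$ over $(D,P)$ is finite.

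I expect step (ii) to be the crux: one must read off from Mumford's Prym construction the degree $2$ isogeny $\cJ_C\sim\cJ_D\times P$ with the correct behaviour of the principal polarizations, and then invoke the standard finiteness of principally polarized abelian varieties isogenous to a fixed abelian variety by an isogeny of fixed degree. The remaining steps are formal consequences of Torelli's theorem and of a degree $2$ morphism of curves being cyclic. (An alternative, in the spirit of the proof of Lemma~\ref{lem:aws}, would be to take the closure $\overline{\cD}_4$ inside $\cMbar_4$, use quasi-affineness to force the closure of a component to meet the boundary, and bound the supersingular boundary strata of admissible double covers of genus $2$ curves; but the Prym argument above is shorter.)
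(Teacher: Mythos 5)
Your argument is correct, but it takes a genuinely different route from the paper's. The paper starts from the crude bound $\dim\Gamma\le 3$ and rules out equality by an Ekedahl--Oort argument: since the Mumford isogeny $\cJ_C\sim\cJ_D\times P$ has degree $2$, prime to $p$, the polarized Dieudonn\'e module splits as in \eqref{eqn:dieuprym}, which by Lemma~\ref{lemma:eo_genus4} forces $j(\Gamma)\subseteq\overline{Z}_{[4,3]}$ while excluding the indecomposable stratum $Z_{[4,3,1]}$; quasi-affineness of the strata $Z_{[4,3]}$ and $Z_{[4,3,2]}$ then cascades down to a contradiction with $\dim Z_{[4,3,2,1]}=0$. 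You instead use the same isogeny to make the assignment $(C,\pi)\mapsto(D,P)$ quasi-finite and pull back the bound $\dim(\cM_2^{ss}\times\cS_2)=2$. Your route avoids the Ekedahl--Oort stratification entirely, which is a real simplification, but it needs two inputs you should make explicit: (a) that the Prym assignment $\Phi$ is actually a morphism (the relative Prym construction for families of double covers), without which the fibre-dimension inequality has no meaning; and (b) in step (ii), your parenthetical justification (finitely many order-$2$ kernels) only bounds the possible $\cJ_C$ as \emph{unpolarized} abelian varieties, whereas Torelli requires the principally polarized Jacobian --- so you must also invoke the finiteness, up to isomorphism, of principal polarizations on a fixed abelian variety (a Narasimhan--Nori type statement, valid in characteristic $p$), which you gesture at in your final paragraph but should cite. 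With those two points supplied, every fibre of $\Phi$ is finite and the lemma follows; the paper's proof, by contrast, needs neither the Prym map in families nor polarization finiteness, only the splitting \eqref{eqn:dieuprym} and general properties of the $Z_\mu$.
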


\begin{proof}
Let $\Gamma$ be any component of $\cD_4^{ss}$ and recall that $\dim \Gamma \leq 3$. Assume that $\dim \Gamma = 3$. Let $S = \overline{j(\Gamma)} \subseteq \cA_4$ be the closure of the image of $\Gamma$ under the Torelli map. Then $S$ is a closed $3$-dimensional family of supersingular abelian fourfolds. Moreover, since the degree of isogeny \eqref{eqn:isogCD} is $2$, which is coprime to $p > 2$, there is an isomorphism of polarized Dieudonn\'e modules 
\begin{equation}
\mathbb{D}(\cJ_C)\cong \mathbb{D}(\cJ_D) \oplus \mathbb{D}(P).    
\label{eqn:dieuprym}
\end{equation}
By Lemma \ref{lemma:eo_genus4}, this implies $j(\Gamma) \subseteq \overline{Z}_{[4, 3]}$, i.e., that $S = \overline{j(\Gamma)}$ is a component of $\overline{Z}_{[4, 3]}$ by comparing their dimensions. We claim that $S\cap Z_{[4, 3, 1]} = \o$. Indeed, since the Ekedahl-Oort type $[4, 3, 1]$ occurs only for indecomposable abelian varieties, it would follow that there is a smooth curve $C$ in $\Gamma$ with $\mu(\cJ_C) = [4, 3, 1]$ (see Lemma \ref{lemma:eo_genus4}). However, \eqref{eqn:dieuprym} tells us this cannot happen. Therefore $$S \subseteq Z_{[4, 3]}\cup Z_{[4, 3, 2]}\cup Z_{[4, 3, 2, 1]}.$$ Finally, using that $Z_{[4, 3]}$ and $Z_{[4, 3, 2]}$ are quasi-affine by  \cite[Theorem 1.2]{oort}, and $\dim S=3$ by our assumption, we would get that $\dim (S \cap \overline{Z}_{[4, 3, 2]})\geq 2$ and then $\dim (S \cap Z_{[4, 3, 2, 1]})\geq 1$, which is impossible since $S \cap Z_{[4, 3, 2, 1]} \subseteq Z_{[4, 3, 2, 1]}$ and $\dim Z_{[4, 3, 2, 1]} = 0$. Hence, $\dim \Gamma \leq 2$ and the result follows.  
\end{proof}

\section{Proof of Theorem \ref{thm:genus4}}
\label{sec:proof}

Let $p>2$ be any prime number, let $\Gamma$ be any component of $\cM_4^{ss}$, and let $C$ be a smooth curve corresponding to the generic point of $\Gamma$. Suppose that $\mathrm{Aut}(C)$ contains an automorphism $\gamma$ which is of prime order $l>1$ and denote $G = \<\gamma\> \subseteq \mathrm{Aut}(C)$. Let $D$ be the quotient curve $C/G$, let $g_D\geq 0$ be its genus, and note that the $p$-rank of $D$ equals $f_D = 0$ since $C$ is supersingular. Below, we consider the canonical map $$C \to D = C/G$$ for all possible choices of $l$ and $g_D$ and discuss the outcome. 
\begin{enumerate} 

    \item \label{itm:1} If $l>3$ or $l = 3$ and $g_D \geq 1$, then Lemma \ref{lem:agp_dim} tells us that $\dim \Gamma \leq 2.$ 
    \item \label{itm:2} If $l = 3$ and $g_D = 0$ so that $D \cong \P^1$ then $C$ is either a cyclic trigonal curve if $p \neq 3$ or it is an Artin-Schreier $k$-curve if $l = p = 3$; we discuss this in Section \ref{sec:double0}. In the first case $p \neq 3$, it holds that $\Gamma \subseteq \mathcal{T}_4^{ss}$, so that $\dim \Gamma \leq 2$ by Lemma~\ref{lem:triel}. Otherwise, $l = p = 3$ and $\Gamma \subseteq \mathcal{AS}_{4}^{ss}$ so that $\dim \Gamma \leq 2$ by Lemma \ref{prieszhu}.
    
    \item \label{itm:3} Finally, let us assume that $l = 2$ and show that $\dim \Gamma \leq 2$ in all possible cases.
    \begin{itemize}
    \item  If $g_D = 0$, then $C \to D \cong \P^1$ would be a hyperelliptic curve. This would imply that $\Gamma$ is a component of $\cH_4^{ss}$. Lemma \ref{lem:oort_hess} tells us that $\dim \Gamma \leq 2$. 
    \item If $g_D = 1$, then $C \to D$ is a double cover of $D$ whose $\prank$ equals $0$. Therefore, $\Gamma \subseteq \cB_4^{ss}$, with $\cB_4^{ss}$ the locus of all smooth supersingular bielliptic curves of genus~$4$; we considered these loci in Section \ref{sec:double1}. Lemma \ref{lem:aws} implies that $\dim \cB_4^{ss} \leq 2$, so it would again follow that $\dim \Gamma \leq 2$. 
    
    \item The final possibility is $g_D = 2$, when $C \to D$ is a double cover of a smooth curve~$D$ of genus $2$. As a consequence of our discussion in Section~\ref{sec:double2} we have that $\Gamma \subseteq \cD_4^{ss}$. Therefore, Lemma~\ref{lem:MD4} tells us that $\dim \Gamma \leq 2$. 
    \end{itemize}
\end{enumerate}
However, none of the previously considered cases can happen because any irreducible component $\Gamma$ of $\cM_4^{ss}$ has $\dim \Gamma \geq 3$, as remarked in the proof of Corollary \ref{conj:oort}. Namely, $\dim \Gamma = \dim \overline{j(\Gamma)}$, while $$\overline{j(\Gamma)} \subseteq \cJ_4 \cap \cS_4$$ is a component of $\cJ_4 \cap \cS_4$, so the smoothness of $\cA_4$ implies that $\dim \Gamma \geq 3$. We conclude $l \nmid |\mathrm{Aut}(C)|$ for every prime number $l>1$ and hence $\mathrm{Aut}(C)$ is a trivial group. This proves the theorem.

\begin{rem}
With the same argument, we can conclude that the generic automorphism groups of components of $\cM_4^{ss}$ in characteristic $p = 2$ cannot contain elements of prime order $l>2$. However, we cannot exclude the case $l = 2$ because of the assumptions on the loci of hyperelliptic curves and the loci considered in Sections \ref{sec:double1} and \ref{sec:double2}. 

In fact, Carel Faber informed us there is a component $\Gamma$ of the supersingular locus~$\cM_4^{ss}$ in characteristic $p = 2$, which generically consists of smooth curves $C$ of genus~$4$ that are double covers of genus $2$ curves. The automorphism group of any such curve $C$ contains an element of order $2$, so the generic automorphism group of~$\Gamma$ is not trivial. Thus, the assumption $p>2$ of Theorem \ref{thm:genus4} is necessary. However, it still might be the case that the generic automorphism group of the locus $\cS_4\subseteq \cA_4$ in characteristic $p = 2$ is $\{\pm 1\}$. 
\label{rem:char2}
\end{rem}

\section{Oort's conjecture in genus $3$}
\label{sec:genus3}

\begin{comment}
We have the following results.

\begin{prop} For $p>3$, generic automorphism group of any component of $\cM_2^{ss}$ equals $\{\pm 1\}$.  
\label{prop:genus2}
\end{prop}
\begin{proof}
Let $\Gamma$ be any component of $\cM_2^{ss}$ and let $C$ be the smooth hyperelliptic curve corresponding to the generic point of $\Gamma$. Recall $\dim \Gamma = 1$ and let $\iota$ be the hyperelliptic involution of $C$. Similarly as in the proof of Proposition \ref{prop:genus3}, Lemma \ref{lem:agp_dim} and Lemma \ref{lem:triel} tell us that $C$ does not admit an automorphism of prime order $l>2$. Finally, since the hyperelliptic involution $\iota$ is unique, if there is an automorphism $\theta \neq \iota$ of order $2$ in $\mathrm{Aut}(C)$, then $E = C/\<\theta\>$ and $E' = C/\<\iota \circ \theta\>$ would be two supersingular elliptic curves. Therefore, there would exist an isogeny $\cJ_C \sim E \times E'$ of degree $2 \neq p$ and $C$ would be a superspecial curve. Since the locus of superspecial curves is finite, we can finally conclude that $\mathrm{Aut}(C) = \<\iota\> \cong \{\pm 1\}$.  
\end{proof}
\end{comment}

In this section, we present new proofs of some known results about the loci of supersingular curves of genus $g = 3$. We start by proving an analog result of Theorem \ref{thm:genus4} for $g = 3$ with similar techniques.
\begin{prop}
For $p>2$, every component of $\cM_3^{ss}$ has a trivial generic automorphism group.    
\label{prop:genus3}
\end{prop}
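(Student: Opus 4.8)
The plan is to mimic the proof of Theorem \ref{thm:genus4}, replacing the genus $4$ dimension bounds by their genus $3$ counterparts. Let $\Gamma$ be a component of $\cM_3^{ss}$, and let $C$ be a smooth curve representing its generic point. The first observation I would record is the lower bound $\dim \Gamma \geq 2$: indeed, by Igusa's theorem the Torelli locus $\cJ_3 = j(\cMbar_3)\cap \cA_3$ equals all of $\cA_3$ (it is an open dense subset, since $\dim \cM_3 = 6 = \dim \cA_3$), so every component of $\cS_3$ meets $j(\cM_3)$, and a dimension count using $\dim \cS_3 = \lfloor 9/4\rfloor = 2$ together with $\dim \Pi \leq 1$ for the locus of products shows that the generic point of $\Gamma$ is a genuine smooth curve and that $\dim \Gamma = \dim \overline{j(\Gamma)} \geq \dim \cS_3$-component $= 2$; actually since $\cS_3$ is pure of dimension $2$, one gets $\dim \Gamma = 2$.

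Next, suppose for contradiction that $\Aut(C)$ is nontrivial, so it contains an automorphism $\gamma$ of prime order $l>1$; put $G = \<\gamma\>$ and $D = C/G$ with genus $g_D \geq 0$ and $p$-rank $f_D = 0$. I would now go through the cases exactly as in Section \ref{sec:proof}. If $l>3$, or $l=3$ and $g_D\geq 1$, then Lemma \ref{lem:agp_dim} gives $\dim \Gamma \leq 2(3-g_D)/(l-1) + f_D - 1 \leq 1$; for $l=3$, $g_D\geq 1$ this is $\leq 2(3-1)/2 - 1 = 1$, and for $l\geq 5$ it is even smaller. If $l = 3$ and $g_D = 0$, then $D\cong \P^1$ and $C$ is either cyclic trigonal (if $p\neq 3$), so $\Gamma \subseteq \mathcal{T}_3^{ss}$ and $\dim\Gamma \leq 1$ by Lemma \ref{lem:triel}, or an Artin-Schreier curve (if $l = p = 3$), so $\Gamma\subseteq \mathcal{AS}_3^{ss}$ and $\dim \Gamma \leq 1$ by Lemma \ref{prieszhu}. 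If $l = 2$ and $g_D = 0$, then $C$ is hyperelliptic and $\Gamma$ is a component of $\cH_3^{ss}$, so $\dim\Gamma \leq 1$ by Lemma \ref{lem:oort_hess}. If $l = 2$ and $g_D = 1$, then $C\to D$ is a bielliptic double cover with $f_C = 0$, so $\Gamma \subseteq \cB_3^{ss}$ and $\dim\Gamma \leq 1$ by Lemma \ref{lem:aws}. Since $g_D \leq g - 1 = 2$ is forced by Riemann-Hurwitz when $l = 2$ (a double cover $C\to D$ gives $4 = 2g_C - 2 = 2(2g_D - 2) + |B|$, so $g_D \leq 2$, and $g_D = 2$ forces $|B| = 2$), the remaining case is $l = 2$, $g_D = 2$; but $D$ must then have $p$-rank $0$, i.e. $D$ is supersingular, and $\dim \cM_2^{ss} = 1$ (since $\overline{j(\cM_2^{ss})} = \cS_2$ is pure of dimension $1$). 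As in Section \ref{sec:double2}, a genus $2$ curve $D$ together with a pair of distinct branch points and a line bundle $L$ with $L^2 \cong \O_D(-b_1-b_2)$ determines $C$ up to finitely many choices, so this locus has dimension $\leq 1 + 2 = 3$; but $f_C = 0$ forces both $\cJ_D$ and the Prym $P$ to be supersingular, and the Prym is an \emph{elliptic curve} here (a principally polarized abelian variety of dimension $g_C - g_D = 1$), of which there are finitely many, so actually the dimension is $\leq \dim \cM_2^{ss} + (\text{finite data}) \leq 1$. In every case we obtain $\dim \Gamma \leq 1$, contradicting $\dim \Gamma = 2$.

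Therefore $l \nmid |\Aut(C)|$ for every prime $l$, so $\Aut(C)$ is trivial, proving the proposition. The only step requiring genuine care, rather than transcription from the genus $4$ argument, is the case $l = 2$, $g_D = 2$: here I would note that the Prym variety is one-dimensional, making the bound immediate, rather than invoking an analogue of Lemma \ref{lem:MD4}. An alternative, perhaps cleaner, route for that case is to observe that the isogeny $\cJ_C \sim \cJ_D \times P$ of degree $2$ coprime to $p$ forces $\mathbb{D}(\cJ_C)\cong \mathbb{D}(\cJ_D)\oplus\mathbb{D}(P)$ to be decomposable, whereas the generic supersingular abelian threefold in any component of $\cS_3$ has indecomposable Dieudonn\'e module (one can check that the Ekedahl-Oort strata of $\cA_3$ forcing $\dim = 2$ are the ones with indecomposable $\mathbb{D}$, paralleling Lemma \ref{lemma:eo_genus4}), which already rules this case out without any dimension estimate. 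I would present whichever of these is shorter given what has been set up.
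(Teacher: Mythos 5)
Your overall structure is the paper's, and the cases $l\geq 5$, $l=3$, and $l=2$ with $g_D\in\{0,1\}$ are handled exactly as in the paper and are fine. The genuine gap is in the case $l=2$, $g_D=2$. Riemann--Hurwitz gives $2g_C-2 = 2(2g_D-2)+|B|$, i.e.\ $4 = 4 + |B|$, so $|B|=0$: a degree-$2$ cover $C\to D$ with $g_C=3$ and $g_D=2$ is necessarily \emph{\'etale}. Your claim that $g_D=2$ forces $|B|=2$ is the genus-$4$ computation transcribed into the wrong setting, and the ensuing parametrization (two branch points plus a square root of $\O_D(-b_1-b_2)$) describes the locus $\cD_4\subseteq\cM_4$, not a locus in $\cM_3$. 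Worse, even on its own terms the count does not give what you need: requiring the $1$-dimensional Prym to be one of the finitely many supersingular elliptic curves is the condition that the image under the Prym map lies in a finite subset of $\cA_1$, which cuts the dimension by at most $1$; starting from $\dim\cM_2^{ss}+2=3$ this only yields a bound of $2$, which fails to contradict $\dim\Gamma=2$. The fibers of the Prym map need not be finite, so ``finitely many Pryms'' does not mean ``finite remaining data.'' The correct argument --- and the one the paper uses --- exploits precisely the \'etaleness: for a fixed smooth $D$ the connected \'etale double covers of $D$ correspond to the nonzero points of $\cJ_D[2](k)\cong(\Z/2\Z)^4$ (as $p\neq 2$), hence are finite in number, so this locus has dimension at most $\dim\cM_2^{ss}=1$.

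Your proposed alternative via the decomposition $\mathbb{D}(\cJ_C)\cong\mathbb{D}(\cJ_D)\oplus\mathbb{D}(P)$ (the isogeny having degree a power of $2$, coprime to $p$) is a workable repair, but the assertion that the generic point of every component of $\cS_3$ has indecomposable Dieudonn\'e module is not free: one must show that such a component has generic Ekedahl--Oort type $[3]$, which in turn requires knowing that $Z_{[3,1]}$ is not generically supersingular --- this is Oort's minimality together with Harashita's computation, exactly the input the paper invokes in its final proposition --- so ``one can check'' is hiding real content. Either fix the Riemann--Hurwitz count and use the rigidity of \'etale covers, or spell out the Ekedahl--Oort argument; as written, the $g_D=2$ case does not close.
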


\begin{proof}
We follow the argument of the proof of Theorem \ref{thm:genus4}. Let $\Gamma$ be any component of $\cM_3^{ss}$ and let $C$ be the smooth curve corresponding to the generic point of $\Gamma$. Suppose that $\mathrm{Aut}(C)$ contains an automorphism $\gamma$ which is of prime order $l>1$ and denote $G = \<\gamma\> \subseteq \mathrm{Aut}(C)$. Let $D$ be the quotient curve $C/G$ and note that $f_D = 0$ since $C$ is supersingular. We use that $\cS_3$ is pure of dimension $2$ so that $\dim \Gamma = 2$.

Below, we consider all possible choices of $l$ and $g_D$.
\begin{itemize}
    \item If $l>3$ or $l = 3$ and $g_D \geq 1$, then Lemma \ref{lem:agp_dim} tells us that $\dim \Gamma \leq 1$, which is impossible since $\dim \Gamma = 2$. 
    \item If $l = 3$ and $g_D = 0$, then $\Gamma \subseteq \mathcal{T}_{3}^{ss}$ if $p \neq 3$ or $\Gamma \subseteq \mathcal{AS}_{3}^{ss}$ if $l = p = 3$. Now, Lemma~\ref{lem:triel} and Lemma \ref{prieszhu} tell us that $\dim \Gamma \leq 1$ in both cases.
    \item Finally, assume that $l = 2$. If $g_D = 0$, then $\Gamma$ would be a component of $\cH_3^{ss}$, so that $\dim \Gamma = 1$ by Lemma \ref{lem:oort_hess}. If $g_D = 1$, then $\Gamma\subseteq \cB_3^{ss}$ and thus $\dim \Gamma \leq 1$ by Lemma \ref{lem:aws}. The final possibility is that $g_D = 2$ when $C \to D$ would be an \'etale double cover of a supersingular curve $D$ of genus $2$. Note that for any smooth curve $D$, there are only finitely many \'etale double covers $C \to D$ and they correspond to the $2$-torsion points of $\cJ_D$, i.e., to $\cJ_D[2]$. In this case, $\cJ_D[2](k) \cong (\Z/2\Z)^{4}$ as $p\neq 2$. Therefore, it follows that $\dim \Gamma \leq 1$ using that $\dim \cM_2^{ss} = 1$ as $\overline{j (\cM_2^{ss})} = \cS_2$ and $\dim \cS_2 = 1$.
\end{itemize}
Therefore, $l \nmid |\mathrm{Aut}(C)|$ for any prime number $l>1$, so that $\mathrm{Aut}(C)$ is a trivial group. 
\end{proof}

Recall that Oort's conjecture \ref{conj:oort} for $g = 3$ was proven in \cite{kyy} for any $p>2$. Their proof relies on geometry and arithmetic of $\cS_3 \subseteq \cA_3$. Below, we prove this result using the preceding proposition in a similar way that Theorem \ref{thm:genus4} was used in the proof of Corollary~\ref{cor:main_oort_a4}. 

\begin{comment}
    If $g = 2$, then every component of $\cS_2$ is $1$-dimensional and its generic point corresponds to a Jacobian of a smooth curve, which is necessarily hyperelliptic. Torelli's theorem \cite[Theorem 12.1]{milne} and Proposition \ref{prop:genus2} complete the proof in this case.
\end{comment}

\begin{cor}
 Oort's conjecture \ref{conj:oort} holds for $g = 3$ and any $p>2$.
\end{cor}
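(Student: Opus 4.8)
The plan is to deduce Oort's conjecture for $g=3$ and $p>2$ from Proposition \ref{prop:genus3} in exactly the same way that Corollary \ref{cor:main_oort_a4} was deduced from Theorem \ref{thm:genus4}. The key inputs are: the Torelli locus $\cJ_3 = j(\cMbar_3)\cap \cA_3$ is all of $\cA_3$ (since $\dim \cM_3 = 6 = \dim \cA_3$ and the Torelli morphism is generically injective, its image is dense, hence $\cS_3 \subseteq \overline{\cJ_3}$); the supersingular locus $\cS_3$ is pure of dimension $\lfloor 9/4 \rfloor = 2$; and the locus $\Pi \subseteq \cA_3$ of decomposable principally polarized abelian threefolds (products of lower-dimensional Jacobians) has dimension $\dim \cS_2 + \dim \cS_1 = 1 + 0 = 1$ for its supersingular part, so a generic point of any component $\Gamma$ of $\cS_3$ is the Jacobian of a smooth curve.

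First I would fix a component $\Gamma$ of $\cS_3$ and argue that its generic point represents the Jacobian $\cJ_C$ of a smooth curve $C$ of genus $3$: since $\dim \Gamma = 2 > 1 = \dim \Pi^{ss}$, the generic point lies outside $\Pi$, and a principally polarized abelian threefold not in $\Pi$ is a Jacobian of a smooth curve by the classical fact that the Torelli map is a bijection onto the locus of indecomposable principally polarized threefolds (combined with the ampleness-type covering observation, or simply $\dim \cS_3 = \dim \cM_3^{ss}$). Next, I would split according to whether $C$ is hyperelliptic. If $C$ is non-hyperelliptic, Torelli's theorem \cite[Theorem 12.1]{milne} gives $\Aut(\cJ_C)\cong \Aut(C)\times \{\pm 1\}$, and Proposition \ref{prop:genus3} says $\Aut(C)$ is trivial at the generic point of the corresponding component of $\cM_3^{ss}$, so the generic automorphism group of $\Gamma$ is $\{\pm 1\}$.

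The remaining case is when the generic curve $C$ of a component $\Gamma$ is hyperelliptic, i.e. $\Gamma = \overline{j(\Gamma')}$ with $\Gamma'$ a component of $\cH_3^{ss}$; here Torelli's theorem instead gives $\Aut(\cJ_C) \cong \Aut(C)/\<\iota\> \times \{\pm 1\}$ with $\iota$ the hyperelliptic involution, so one must show $\Aut(C)=\<\iota\>$ generically. By Lemma \ref{lem:oort_hess}, $\dim \cH_3^{ss} = 1 = \dim \Gamma'$, so $\Gamma'$ is a whole component of $\cM_3^{ss}$, and Proposition \ref{prop:genus3} applies to it directly: its generic automorphism group is trivial, forcing $\Aut(C) = \<\iota\>$ and hence $\Aut(\cJ_C) = \{\pm 1\}$. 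Putting the two cases together, every component of $\cS_3$ has generic automorphism group $\{\pm 1\}$, which is Oort's conjecture for $g=3$, $p>2$.

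The main obstacle, as in Corollary \ref{cor:main_oort_a4}, is bookkeeping the hyperelliptic locus correctly: one needs that a component of $\cS_3$ whose generic point is a hyperelliptic Jacobian really does pull back to a full component of $\cM_3^{ss}$ (so that Proposition \ref{prop:genus3} is applicable), and one must be careful that the passage $\Aut(C)\rightsquigarrow \Aut(\cJ_C)$ is handled by the right version of Torelli in the hyperelliptic case. This is where the dimension equality $\dim \cH_3^{ss} = 1 = \dim \cS_3 - 1 \cdot 0$ from Lemma \ref{lem:oort_hess} does the work, together with purity of $\cS_3$; everything else is a direct transcription of the genus-four argument.
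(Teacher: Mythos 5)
Your overall strategy is the paper's: reduce to Proposition \ref{prop:genus3} via Torelli, after checking that the generic point of each component of $\cS_3$ is the Jacobian of a smooth curve. The non-hyperelliptic branch of your argument is correct and matches the paper. The problem is the hyperelliptic branch, which as written is internally inconsistent.

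Concretely: you posit a component $\Gamma$ of $\cS_3$ whose generic point is a hyperelliptic Jacobian, so that $\Gamma = \overline{j(\Gamma')}$ with $\Gamma'$ an irreducible subset of $\cH_3^{ss}$. Since $\cS_3$ is pure of dimension $2$ while $\dim \cH_3^{ss} = 1$ by Lemma \ref{lem:oort_hess} and $j$ is quasi-finite onto its image, this already gives $2 = \dim \Gamma \leq \dim \Gamma' \leq 1$, a contradiction: the case is vacuous, and that dimension count is the entire content of this branch (it is exactly what the paper extracts from Oort's Theorem 1.12, namely that the generic point of every component of $\cS_3$ is a \emph{non-hyperelliptic} Jacobian). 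Instead, you assert that $\Gamma'$, being $1$-dimensional, ``is a whole component of $\cM_3^{ss}$'' --- a non sequitur, and false: every component of $\cM_3^{ss}$ is $2$-dimensional (as used in the proof of Proposition \ref{prop:genus3}), so a $1$-dimensional $\Gamma'$ is properly contained in one. You then apply Proposition \ref{prop:genus3} to conclude that the generic automorphism group of $\Gamma'$ is trivial and that this ``forces $\Aut(C) = \<\iota\>$'' --- but a trivial group cannot contain the hyperelliptic involution, so this sentence derives a contradiction rather than the conclusion you want. The fix is simply to delete this branch and replace it with the one-line dimension comparison above; with that repair your proof coincides with the paper's.
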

\begin{proof}
 By \cite[Theorem 1.12]{oort_hess}, we know that any component of $\cS_3$ is $2$-dimensional and that its generic point corresponds to a Jacobian of a smooth curve, which is non-hyperelliptic. Similarly as in the proof of Corollary \ref{cor:main_oort_a4}, it is enough to show that the generic point $C$ of any component $\Gamma$ of $\cM_3^{ss}$ has a trivial automorphism group. This follows from Proposition~\ref{prop:genus3}. 
\end{proof}

We end this section by presenting a new short proof that we could not find in the literature of a famous result by Oort in \cite[Theorem 5.6]{oort_hess}, stating that there are no smooth supersingular hyperelliptic curves of genus $3$ in characteristic $2$. Our proof uses an interplay between the Newton polygon and Ekedahl-Oort strata as well as some restrictions on possible Ekedahl-Oort types of hyperelliptic curves specific to characteristic $2$. 

\begin{prop} In characteristic $2$, there does not exist a smooth supersingular hyperelliptic curve of genus $3$.
\end{prop}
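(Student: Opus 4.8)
The plan is to exploit the very restricted combinatorics of $p$-rank $0$ curves of genus $3$ in characteristic $2$, combined with the fact that a hyperelliptic curve of genus $3$ in characteristic $2$ has an Artin--Schreier quotient structure forced by the hyperelliptic involution. First I would recall that in characteristic $2$ the hyperelliptic involution $\iota$ makes $C \to C/\langle\iota\rangle \cong \P^1$ an Artin--Schreier $\Z/2\Z$-cover, so $C$ is an Artin--Schreier curve with $g = d(p-1)/2 = d/2$; for $g = 3$ this forces $d = 6$. By Lemma \ref{prieszhu} with $p = 2$, the $p$-rank $0$ locus $\mathcal{AS}_{3,0}$ is non-empty only if $d + 1 = 7 \equiv 0 \bmod 2$, which fails. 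Hence there is no Artin--Schreier curve of genus $3$ in characteristic $2$ with $f_C = 0$, and in particular no supersingular hyperelliptic such curve.

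Alternatively — and this is probably the route the author takes, given the announced emphasis on ``an interplay between the Newton polygon and Ekedahl--Oort strata'' — I would argue via Ekedahl--Oort types. A supersingular abelian threefold has $p$-rank $0$, hence Ekedahl--Oort type $\mu$ with $\mu_1 = 3$, so $\mu \in \{[3], [3,1], [3,2], [3,2,1]\}$. The key input specific to characteristic $2$ is a restriction (from the literature on Ekedahl--Oort types of hyperelliptic curves, e.g. the classification used in \cite{elkinpries} or \cite{pries_eo_classification}) ruling out certain of these types, or forcing the final canonical filtration type of a hyperelliptic Jacobian to be incompatible with the Newton polygon $(1/2)^3$. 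One then checks that the only $\mu$ surviving the hyperelliptic restriction lies in a stratum $Z_\mu$ whose closure meets the supersingular locus $\cS_3$ only in a locus not containing hyperelliptic Jacobians, or does not meet it at all; a dimension count ($\dim Z_\mu = \tfrac{g(g+1)}{2} - \sum \mu_i$ versus $\dim \cH_3 = 2g - 1 = 5$ and $\dim \cS_3 = 2$) then yields the contradiction.

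The main obstacle will be pinning down exactly which Ekedahl--Oort types are realized by hyperelliptic Jacobians in characteristic $2$: the generic behaviour of the a-number and of the canonical filtration for $\Z/2\Z$-covers in characteristic $2$ is subtler than the Artin--Schreier dimension formula suggests, and one must be careful that the forbidden-type statement one invokes is genuinely for all of $\cH_3$ and not merely its generic point. I expect the cleanest writeup simply combines the Artin--Schreier observation (hyperelliptic $\Rightarrow$ Artin--Schreier in characteristic $2$) with Lemma \ref{prieszhu} to conclude $\mathcal{AS}_{3,0} = \o$, and uses the Newton polygon only to upgrade ``$p$-rank $0$'' to the statement as phrased, namely that supersingularity (which implies $f_C = 0$) is impossible; the Ekedahl--Oort language then serves to cross-check that no degenerate member of $\cH_3$ can be supersingular either.
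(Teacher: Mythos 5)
Your primary route fails at a concrete point: the conclusion $\mathcal{AS}_{3,0} = \o$ in characteristic $2$ is false. For $p=2$ and $g=3$ one has $d = 2g = 6$, and the actual Pries--Zhu criterion (their Theorem 1.1) is that the $p$-rank $0$ locus is non-empty precisely when the unique ramification invariant $e_1 = d+2$ satisfies $e_1 \not\equiv 1 \bmod p$, i.e.\ when $d+1 \not\equiv 0 \bmod p$. The statement of Lemma \ref{prieszhu} in the text has this congruence reversed (compare its own ``in particular'' clause: for $p=3$, $g=3$ one has $d=3$ and $d+1 = 4 \not\equiv 0 \bmod 3$, yet $\dim \mathcal{AS}_{3,0} = 1$), and you have taken the reversed version at face value. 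For $p=2$, $d=6$ the correct condition $d+1 = 7 \not\equiv 0 \bmod 2$ holds, so $\mathcal{AS}_{3,0}$ is non-empty of dimension $6 - 1 - \lfloor 7/2 \rfloor = 2$, consistent with Glass--Pries ($\dim V_0\cH_3 = g-1+0 = 2$); concretely, $y^2 + y = x^7$ is a smooth hyperelliptic curve of genus $3$ and $2$-rank $0$ in characteristic $2$. So the proposition cannot be proved by showing the $2$-rank $0$ hyperelliptic locus is empty: it is $2$-dimensional, and the whole content of the statement is that none of its members is supersingular.

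Your second sketch is the right strategy and matches the paper's, but as written it is not yet a proof: it is missing the two concrete inputs, and the dimension count you propose at the end is not the mechanism. What is needed is: (a) \emph{every} smooth hyperelliptic curve of genus $3$ with $2$-rank $0$ in characteristic $2$ has Ekedahl--Oort type exactly $[3,1]$ (\cite[Theorem 3.2]{vdgeercycle} or \cite[Corollary 5.3]{elkinpries}; this is a statement about all such curves, not only generic ones, so the caveat you raise is handled by the reference); and (b) by Oort's minimality theorem \cite[Theorem 1.2]{oort_minimal} together with Harashita's computation \cite{harashita}, $[3,1]$ is the minimal Ekedahl--Oort type of the Newton stratum with slopes $\frac{1}{3}, \frac{2}{3}$, so every abelian threefold of Ekedahl--Oort type $[3,1]$ has that Newton polygon and in particular is not supersingular. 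Combining (a) and (b) finishes the argument pointwise; no comparison of $\dim Z_\mu$ with $\dim \cH_3$ or $\dim \cS_3$ is needed, nor would such a comparison suffice, since a dimension count cannot exclude a lower-dimensional supersingular sublocus inside the $2$-rank $0$ hyperelliptic locus.
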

\begin{proof} 
 By Oort's minimality \cite[Theorem 1.2]{oort_minimal}, for every $g\geq 2$ and every Newton polygon $\xi$ occurring for some $g$-dimensional p.p.a.v. there is a Young diagram $\mu_{\xi}$ such that $$Z_{\mu_{\xi}} \subseteq \mathcal{N}_{\xi},$$ where $\mathcal{N}_{\xi}\subseteq \cA_g$ is the locus of all p.p.a.v.'s of dimension $g$ whose Newton polygon equals~$\xi$. From Harashita's computations in \cite{harashita}, if $\xi = \xi_{\frac{1}{3}}$ is the Newton polygon for $g = 3$ with slopes $\frac{1}{3}$ and $\frac{2}{3}$, it follows that $\mu_{\xi} = [3, 1]$. In other words, every $3$-dimensional p.p.a.v. whose Ekedahl-Oort type equals $[3, 1]$ has Newton polygon $\xi_{\frac{1}{3}}$. 
 
 By \cite[Theorem~3.2]{vdgeercycle}  or \cite[Corollary 5.3]{elkinpries}, for every smooth hyperelliptic curve~$C$ of genus $3$ with $2$-rank $0$ it holds that $\mu(\cJ_{C}) = [3, 1]$. Therefore, the Newton polygon of $\cJ_C$ equals $\xi_{\frac{1}{3}}$ so that $C$ cannot be supersingular. In other words, $\cH_3^{ss}$ is empty in characteristic~$2$.
\end{proof}

\end{document}